\newtheoremstyle{iremark}
  {\topsep}   
  {\topsep}   
  {\upshape}  
  {0pt}       
  {\itshape}  
  {:}         
  {5pt plus 1pt minus 1pt} 
  {\thmname{#1}\thmnumber{ \itshape#2}\thmnote{ (#3)}} 
\theoremstyle{iremark}
\newtheorem{proposition}{\it{Proposition}}
\newtheorem{theorem}{\it{Theorem}}
\newtheorem{definition}{\it{Definition}}
\newtheorem*{remark*}{Remark}
\newtheorem{remark}{\it{Remark}}
\newcommand{\ba}{\begin{align}}
\newcommand{\ea}{\end{align}}
\newcommand{\fr}{\frac}
\title{\LARGE \bf Periodic Event-Triggered Boundary Control of Neuron Growth with Actuation at Soma} 
\author{
Cenk Demir$^1$, 
Mamadou Diagne$^1$ 
and Miroslav Krstic$^1$ \thanks{$^1$Department of Mechanical and Aerospace Engineering, UC San Diego, 9500 Gilman Drive, La Jolla, CA, 92093-0411, {\tt\small cdemir@ucsd,edu, mdiagne@ucsd.edu, krstic@ucsd.edu}}}
\begin{document}
\maketitle
\begin{abstract}

Exploring novel strategies for the regulation of axon growth, we introduce a periodic event-triggered control (PETC)  to enhance the practical implementation of the associated PDE backstepping control law. Neurological injuries may impair neuronal function, but therapies like Chondroitinase ABC (ChABC) have shown promise in improving axon elongation by influencing the extracellular matrix. This matrix, composed of extracellular macromolecules and minerals, regulates tubulin protein concentration, potentially aiding in neuronal recovery. The concentration and spatial distribution of tubulin influence axon elongation dynamics. Recent research explores feedback control strategies for this model, leading to the development of an event-triggering control (CETC) approach. In this approach, the control law updates when the monitored triggering condition is met, reducing actuation resource consumption. Through the meticulous redesign of the triggering mechanism,   we introduce a periodic event-triggering control (PETC), updating control inputs at specific intervals, but evaluating the event-trigger only periodically, an ideal tool for standard time-sliced actuators like ChABC. PETC is a step forward to the design of practically feasible feedback laws for the neuron growth process.  The PETC strategy establishes an upper bound on event triggers between periodic examinations, ensuring convergence and preventing Zeno behavior. Through Lyapunov analysis   , we demonstrate the local exponential convergence of the system with the periodic event-triggering mechanism in the $L^2$-norm sense. Numerical examples are presented to confirm the theoretical findings.
\end{abstract}

\section{Introduction}
\label{sec:intro}

Neurons, as fundamental components of neural networks, are intricately involved in sensory processing \cite{squire2012fundamental}. They play a critical role in acquiring and interpreting sensory information by transmitting electrical signals through their specialized structures. This communication occurs between neurons via their axons, which function as cellular wires. Axons consist of groups of proteins called ``tubulin." The dynamics of these proteins facilitate the elongation of axons, allowing them to reach the target neuron, establish synaptic connections, and complete the transmission process. However, neurological diseases or injuries can disrupt or completely halt this transmission such as Alzheimer's disease \cite{maccioni2001molecular} and spinal cord injuries \cite{liu1997neuronal}. In such complications, neurons may degenerate, leading to axonal shrinkage or an inability to reach target neurons for signal transmission. Until recently, the prevailing belief was that injured neurons could not regenerate to complete the transmission process \cite{huebner2009axon}. However, recent research has clarified that regeneration is possible under certain conditions. A recently developed therapy method, Chondroitinase ABC, shows promising potential for regenerating neurons \cite{bradbury2011manipulating,karimi2010synergistic}. With this therapy, it is possible to stimulate neuron elongation.

Advancements in ChABC and other therapy methods could help neuron regeneration, but the key factor of this regeneration will be the degree of neuronal elongation which is a process regulated by tubulin dynamics. This dynamical process is modeled with different mathematical approaches and assumptions \cite{van1994neuritic,garcia2017continuum}. However, a more comprehensive model for tubulin-based axon elongation is proposed in \cite{mclean2004continuum}. This model comprises a diffusion-reaction-advection partial differential equation (PDE) governing tubulin evolution along the axon, alongside an ordinary differential equation describing tubulin evolution at the axon's far end, namely the growth cone, and the axon's length over time. This specific model takes the form of a Stefan-type PDE, a well-studied model in the literature \cite{gupta2017classical}. Furthermore, mathematical stability analysis and the effect of model parameters of the axon elongation are detailed in \cite{mclean2006stability}.

In recent times, PDE systems have garnered significant attention across various disciplines. Control system engineering, in particular, has emerged as a prominent field offering control strategies for these systems. A pioneering research direction in this domain is boundary control of PDE systems which is based on the work \cite{krstic2008boundary}. 
Following these important initial studies, researchers have broadened their focus to include backstepping-based boundary control of various types of PDEs, systems that combine PDEs with ODEs, as well as systems that involve multiple interacting PDEs \cite{krstic09,susto2010control, tang2011state}. While previous studies focus on constant domain size in time, there are significant works on the global results of moving domains in time \cite{COCV_2003__9__275_0,Petit10,maidi2014,Zhelin20,ecklebe2021toward,izadi2015pde, diagne2015feedback}.  This line of work has been extended to derive local stability results for moving boundary nonlinear hyperbolic PDEs \cite{ bastin2019boundary, buisson2018control, yu2020bilateral}. Achieving stability for nonlinear parabolic PDEs with moving boundaries, especially without using the maximum principle, has been challenging until our recent study on axonal growth \cite{demir2021neuroncontrol,demir2022neuron}.

While the control methods discussed above operate continuously, certain technologies necessitate interventions only when needed, driven by limitations in energy, communication, and computation \cite{heemels2012introduction}. These constraints suggest  control actions that are executed only when needed and thereby enhance actuation resource usage, which is called the event-triggering control. This concept was initially developed for linear systems in \cite{aaarzen1999simple,heemels2008analysis} and nonlinear systems in \cite{kofman2006level}. Following these establishments, it was proposed for infinite dimensional systems as event-triggered boundary control of PDE systems in \cite{espitia2016event}. Static and dynamic triggering mechanisms have been developed for ODEs and various classes of PDEs, specifically, for Stefan problem in \cite{rathnayake2022event2,rathnayake2022event}. Periodic event-triggered control (PETC) and self-triggered control are proposed in    \cite{rathnayake2023periodic} and  \cite{rathnayake2023self}, respectively, for a class of  reaction diffusion PDEs. Moreover, this method finds application in ensuring the safety and convergence to the set point of the Stefan problems in the presence of actuator dynamics in \cite{koga2023event}.

In  addressing the neuron growth problem, a dynamic event-triggering mechanism for a coupled PDE-nonlinear ODE with a moving boundary is introduced in \cite{demir2023event}. Motivated  by the  feasibility challenges encountered in real-time implementation when applying feedback control to biological systems for medical treatements, we convert the continuous time event-triggered control (CETC) design by supplementing a periodic sampling rule. This introduces a dynamic periodic event-triggering control (PETC) approach, where the triggering function is only checked periodically while the  control input being updated aperiodically. The PETC improves the practical implementation of the control law because  it can be applied to standard time-sliced actuators (like ChABC) for axon growth. The strategy involves deriving a novel triggering condition and establishing an upper bound on the continuous-time event trigger between two periodic examinations, explicitly derived as a sampling period in our study. The problem statement in our current study diverges from that addressed in \cite{rathnayake2023observer}, where observer-based PETC was utilized for the classical one-phase Stefan problem. While the outcome in \cite{rathnayake2023observer} pertains to a system exhibiting \emph{geometric nonlinearity,} the neuron growth process involves both \emph{geometrical and analytical nonlinearities} leading to a \emph{local} convergence result.  The PETC boundary controller  guarantees $L^2$ local exponential convergence in closed-loop, akin CETC.

The paper's structure is as follows: Section II introduces the tubulin-driven axon growth model, including the steady-state solution and reference error analysis, and control law design. Section III presents the event-triggering mechanism and results from prior work. Section IV introduces a novel periodic event-triggering control for the axon growth problem. Finally, Section V presents numerical simulation results.

\vspace{-0.5em}

\section{Tubulin-Driven Axon Growth Modeling and Control} \label{sec:model} 

\vspace{-0.25em}

This section introduces the tubulin-driven axonal growth model, a coupled moving boundary PDE with ODEs, and a locally exponentially stabilizing continuous-time boundary control law. 

\subsection{Understanding axon growth}

\subsubsection{A model with a moving boundary PDE}
Tubulin, a collection of proteins, facilitates the development of a newly formed axon. Under
the assumption that unattached tubulin molecules along the
axon is insignificant, and considering that only tubulin
molecules are accountable for axon growth, the newborn
evolution in time and space can be modeled as follows \cite{diehl2014one, diehl2016efficient}.
\begin{align}\label{sys1} 
c_t(x,t) = D &c_{xx} (x,t) - a c_x (x,t) - g c(x,t),  \\
\label{sys2} c_x(0,t)+ c(0,t) &=  - q_{\rm s}(t), \\
\label{sys3} c(l(t),t) = c_{\rm c}& (t), \\
\label{sys4}  \dot{c}_{\rm c}(t) =  \tilde{a} &c_{\rm c}(t) - \beta c_x(l(t), t) -\kappa c_{\rm c}^2(t)+c_{\infty}\tilde{r}_{\rm g}, \\
\label{sys5} \dot{l}(t) = r_{\rm g} &(c_c(t)-c_{\infty})
\end{align}
where the constants in \eqref{sys4} are
\begin{align} 
\tilde a_1 = \frac{a -  r_{\rm g} c_{\infty}}{l_{\rm c}} - g - \tilde{r}_{\rm g} , \quad
\beta =   \frac{D}{l_{\rm c}} , \quad \kappa = \frac{r_{\rm g}}{l_{\rm c}} . 
\end{align} 
In this model, $c(x, t)$ is the tubulin concentration in the axon, varying with the spatial variable $x$ over time $t$, while $q$ represents the combined tubulin flux and concentration at soma. The axon length is denoted $l(t),$ which is the distance between the soma and the growth cone. The subscripts $s$ and $c$ are used for the soma and the growth cone, respectively. This notation can be seen in Fig. \ref{fig:1b}. The parameters, $D$, $a$ and $g$ in \eqref{sys1} are tubulin diffusivity, velocity, and degradation constants, respectively. The parameters in \eqref{sys4} and \eqref{sys5} include $l_{\rm c}$ representing the growth ratio, $\tilde{r}_{\rm g}$ denoting the reaction rate of the microtubules production process, $c_{\infty}$ as the equilibrium of tubulin concentration in the cone, and $r_{\rm g}$ serving as a lumped parameter. Detailed descriptions and derivations of $r_{\rm g}$ and other parameters are available in \cite{diehl2014one}.

\begin{figure}[t]
\centering
\includegraphics[width=0.5\linewidth]{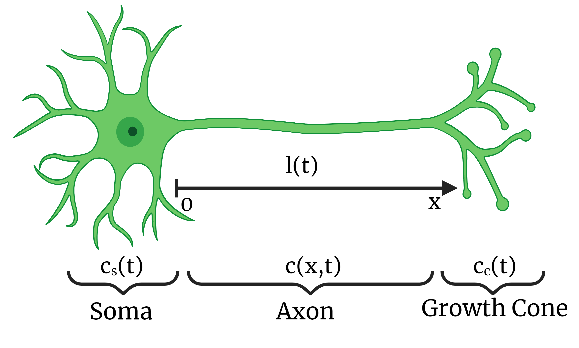}
  \caption{Schematic of neuron and state variables}
  \label{fig:1b} 
\end{figure}

\subsubsection{The steady-state solution and reference error system} We derive a steady-state solution for the concentration, corresponding to a desired axon length $l_{\rm s}$, by setting the time derivatives in \eqref{sys1}, \eqref{sys4}, and \eqref{sys5} to zero. The steady-state spatially distributed steady-state tubulin concentration is
\begin{align}
 \label{ceq} 
c_{\rm eq}(x) = c_{\infty} \left( K_{+} e^{\lambda_+ (x - l_{\rm s})} + K_- e^{\lambda_{-} (x - l_{\rm s}) } \right),
\end{align}
where
\begin{align}
    \lambda_{\pm} =& \frac{a \pm \sqrt{a^2 + 4 D g}}{2 D}, \quad K_{\pm} =  \frac{1}{2} \pm  \frac{a  - 2 g l_{\rm c} }{2 \sqrt{a^2 + 4 D g}},
\end{align}
and the steady-state input for the combination of tubulin flux and concentration in the soma is
\begin{small}
\begin{align}
    q_{\rm s}^* = - c_{\infty} \left( K_{+}(1+\lambda_+)  e^{ - \lambda_+ l_{\rm s}} + K_-(1+\lambda_-)  e^{ - \lambda_{-} l_{\rm s} } \right).
\end{align}
\end{small}

\noindent The reference error system relative to \eqref{sys1}-\eqref{sys5} is given by the dynamics of errors given below
\begin{align}
    \label{eqn:def-u-sys1}
&u_t(x,t) = D u_{xx}(x,t) - a u_x(x,t) - g u(x,t) , \\
&u_x(0,t)+u (0,t) =  U(t), \label{eqn:def-U-cont}\\
&u(l(t),t) =h(X(t)) , \\
 &\dot{X}(t) =  A X(t) + f(X(t)) + B u_x(l(t), t), 
 \label{eqn:def-u-sys4}
\end{align}
where the reference error states, $u(x,t)$, $z_1(t)$ and $z_2(t)$, and the reference error input $U(t)$ are defined as
\begin{align}
u(x,t) =& c(x,t) - c_{\rm eq}(x), \quad U(t) =  - ( q_{\rm s}(t) - q_{\rm s}^*) \\
z_{1}(t) =& c_{\rm c}(t) - c_{\infty}, \quad z_2(t) = l(t) - l_{\rm s}.
\end{align}
Here, $X$ is a state vector in $\mathbb{R}^2$,  given  as
\begin{align} \label{xdef}
    X(t)=[ z_1(t) \quad z_2(t)]^\top . 
\end{align}
The parameters and the functions in \eqref{eqn:def-u-sys1}-\eqref{eqn:def-u-sys4} are defined as follows:
\begin{align}
    \label{AB-def} 
 A &= \left[ 
 \begin{array}{cc}
 \tilde a_1 &  -\beta \tilde{a}_2 \\
 r_{\rm g} & 0
 \end{array}  
 \right] , \quad B =  \left[ 
 \begin{array}{c}
 - \beta \\
 0
 \end{array}  
 \right] ,\\
 f(X(t)) &=  - \kappa z_1^2(t)+\beta f_1(z_2(t)), \\
 h(X(t)) &=  z_1(t) + \tilde h (z_2(t)) , \\
 \tilde{h}(z_2(t))&=  c_{\infty}\left(1 - K_{+} e^{\lambda_+ z_2(t)} - K_- e^{\lambda_{-} z_2(t) }\right). \\
 \tilde{a}_2=&c_{\infty}\left(\lambda_+^2K_++\lambda_-^2K_-\right), \\
f_1(z_2(t))=&-c_{\infty}\left(  K_{+}\lambda_+ e^{\lambda_+ z_2(t)} + K_- \lambda_{-}e^{\lambda_{-} z_2(t)} \right)\nonumber \\
    &+\tilde{a}_2z_2(t)+c_{\infty}\frac{a-gl_{\rm c}}{D}.
\end{align}

\subsection{Control Law Design} 
\label{sec:control} 
\subsubsection{Linearization of the finite-dimensional part of the cascade system }
We begin by linearizing the nonlinear ODEs defined in \eqref{eqn:def-u-sys4} around zero states:
\begin{align}
    \label{ulin-PDE}
u_t(x,t) =& D u_{xx}(x,t) - a u(x,t) - g u(x,t) , \\
u_x(0,t)+u(0,t) = & U(t), \label{ulin-BC1} \\
\label{linreferr3}u(l(t),t) =&H^\top X(t)  , \\
\dot{X}(t) = & A_1  X(t) + B u_x (l(t), t), \label{ulin-ODE}
\end{align}
where the vector $H \in \mathbb{R}^2$ is defined as
\begin{align}
  A_1 &= \left[ 
 \begin{array}{cc}
 \tilde a_1 & \tilde a_3 \\
 r_{\rm g} & 0
 \end{array}  
 \right], \color{black}
 ~ H = \left[1 \quad - \frac{(a-gl_{\rm c}) c_{\infty}}{D}\right]^\top ,  \label{C-def}  
\end{align}
where $\tilde{a}_3=\frac{a^2+Dg-agl_{\rm c}}{D^2}$.
By applying the following backstepping transformation  \begin{align}
 \label{bkst}
w(x,t) = & u(x,t) - \int_x^{l(t)} k(x,y) u(y,t) dy \notag\\
& - \phi(x - l(t))^\top X(t), 
\end{align} 
we can map the linearized reference error system to a desired target system which is
\begin{align} 
\label{tar-PDE} w_t(x,&t) = D w_{xx} (x,t) - a w_x(x,t) - g w(x,t) \notag\\
&- \dot l(t) F(x,X(t))  , \\
\label{tar-BC1} w_x(0,t)+&w(0,t) =  -\frac{1}{D}\left(H-\epsilon\right)^\top Bu(0,t),\\
\label{tar-BC2}w(l(t)&,t) =\epsilon^\top X(t) , \\
\label{tar-ODE} \dot{X}&(t) =  (A_1 + BK^\top) X(t) + B w_x(l(t), t),
 \end{align}
with the redundant nonlinear term $F(x,X(t)) \in R$ in \eqref{tar-PDE} is described as
\begin{align} \label{F-def} 
     F(x,X(t))= \left(\phi'(x-l(t))^T-k(x, l(t)) C^T \right) X(t),
\end{align}
where $K\in\mathbb{R}^2$ is chosen as
\begin{align} 
 k_1 > \frac{\tilde a_1}{\beta} , \quad k_2 > \frac{\tilde{a}_3}{\beta} .
 \end{align} 
to make $A_1+BK$ Hurwitz and $\epsilon\in\mathbb{R}^2$ will be chosen in the stability analysis.

The approach for obtaining gain kernels in \eqref{bkst}, namely $k(x,y)$ and $\phi(x)$, is detailed in \cite{demir2021neuroncontrol}. Simply, $k(x,y)$ and $\phi(x)$ are obtained as:
\begin{align}
    &k(x,y)= -\frac{1}{D}\phi(x-y)^\top B,
    \label{kstar} \\ &\phi(x)^\top=\begin{bmatrix}(H-\epsilon)^\top & K^\top-\frac{1}{D}H^\top BH^\top\end{bmatrix}e^{N_1x}\begin{bmatrix} I \\ 0
\end{bmatrix},
\label{phix}
\end{align}
where 
the matrix $N_1 \in R^{4 \times 4}$ is defined as 
\begin{align}
    N_1=\begin{bmatrix}0 & \frac{1}{D}\left(gI+A+\frac{a}{D}BH^\top\right)\\ I &\frac{1}{D}\left(BH^\top+aI\right)\end{bmatrix}.
\end{align}
The inverse transformation is presented as follows:
\begin{align}
    u(x,t)=&w(x,t)+\int_{x}^{l(t)}q(x,y)w(y,t)dy\nonumber \\
    &+\varphi(x-l(t))^\top X(t)
\end{align}
Detailed solutions for the gain kernels are provided in \cite{demir2021neuroncontrol}.
\subsubsection{Continuous-time and sampled-data control law}
By taking the spatial derivative of the transformation and substituting $x=0$ into both the backstepping transformation and its spatial derivative, and setting boundary condition \eqref{tar-BC1}, the control law is derived as
\begin{align}
  U(t)= -\frac{1}{D}\int_0^{l(t)}p(x)Bu(x,t)dx+p(l(t))X(t),
\label{real-input}
\end{align}
where
\begin{equation}
    p(x) = \phi'(-x)^\top+\phi(-x)^\top.
    \label{eqn:def-px}
\end{equation}
The system outlined in \eqref{sys1}-\eqref{sys5}, with the continuous-time controller input \eqref{real-input}, is locally exponentially stable in the $L^2$-norm sense, as demonstrated in \cite{demir2023event}. To develop the periodic event-triggered control mechanism, the CTC input is sampled at discrete intervals, which holds it constant between events. This approach yields the following sampled-data control.
\begin{align}
     U_{k}^{\omega}(t):=U(t_k^{\omega}),
     \label{eqn:def-Utj}
 \end{align}
where is employed at
 \begin{align}
     u_x(0,t)+u(0,t)=U_k^{\omega}(t).
 \end{align}
for $\forall t\in [t_k^{\omega},t_{k+1}^{\omega})$, $k\in\mathbb{N}$ with the increasing time sequence, $I^{\omega}=\{t_k^{\omega}\}_{k\in\mathbb{N}}$, where $t_0^{\omega}=0$ and $\omega=\{``c",``p"\}$. The notations $``c"$ and $``p"$ represent CETC and PETC, respectively. It's important to note that at each sampling time, the control input is sampled from \eqref{real-input}, in other word, the sampled-data control law is the emulation of the continuous-time controller that is to be implemented in a   Zero-Order Hold fashion.

\section{Continuous-time Event Triggered control}
 In this section, we provide a summary of a CETC design detailed in \cite{demir2023event}.
 \begin{definition} Continuous-time event-triggering consists of two stages: the occurrence of the event and the application of the control signal when the event occurs. These steps are 
 \begin{enumerate}
     \item Detection of the time that event occurs: The set  $I^c=\{t^c_0, t^c_1, t^c_2, \ldots\}$ where $t^c_0=0$, represents an increasing sequence of time instances where the events occur.  This set is generated using the following rules. (i) If $S(t, t_k^c) = \emptyset$, events occur at times $\{t_0, \ldots, t_k^c\}$. (ii) If $S(t,t_k^c)\neq\emptyset$, the subsequent event time is determined as $t_{k+1}^c=\inf\left(S(t,t_k^c)\right)$ where
            \begin{align}
               S(t,t_k^c)=\{t\in R_+|t>t_k^c\wedge d^2(t)>-\gamma m(t)\} 
               \label{eqn:def-S}
            \end{align}
            for all $t\in[t_k^c,t_{k+1}^c)$, $d(t)$ is given as 
\begin{align}
d(t)=U(t)-U_k^{c}(t)
\label{eqn:def-dt}
\end{align} and $m(t)$ satisfies the ODE
\begin{align}
    \dot{m}&(t)=-\eta m(t)+\rho d(t)^2-\beta_1 X(t)^2-\beta_2X(t)^4\nonumber \\
    &-\beta_3X(t)^6-\beta_4|w(0,t)|^2-\beta_5 ||w(x,t))||^2.
    \label{eqn:def-m}
\end{align}

The event-triggering design parameters are $\sigma\in (0,1)$, $\gamma>0$, $\eta>0$, and $\beta_i$ and $\rho$ are selected according to the specifications outlined in \cite{demir2023event} as
\begin{align}
    \rho\geq \frac{d_1^2D}{\delta_1}, \quad
    \beta_i=\frac{\alpha_i}{\gamma(1-\sigma)},
    \label{eqn:def-Bi}
\end{align}
where $\delta_1=2D^2$.
\item Event-based control corresponding to the sampled-data control law \eqref{eqn:def-Utj}
where $p(x)$ is defined in \eqref{eqn:def-px}.
 \end{enumerate}
  \end{definition}
\begin{remark}\label{remark1}
The triggering mechanism defined by \eqref{eqn:def-S} and \eqref{eqn:def-m} has the property of $d^2(t)\leq \gamma m(t)$
and $m(t)>0,$ $\forall~t\in[0,\sup\{I^c\})$ as detailed in \cite{demir2023event}. 
\end{remark}
\begin{remark}
   Considering the increasing set of event-times $\{t_k^c\}_{k\in\mathbb{N}}$ with $t_0=0$, the following bound is obtained for the  time derivative of the input holding error
   \begin{align}
        \dot{d}^2(t)\leq& \rho_1d^2(t)+\alpha_1X(t)^2+\alpha_2X(t)^4+\alpha_3X(t)^6\nonumber \\
        &+\alpha_4w(0,t)^2+\alpha_5||w(x,t)||^2,
    \end{align}
 where the   parameters, $\rho_1, \alpha_1, \alpha_2, \alpha_3, \alpha_4, \alpha_5$ are given by   
    \begin{small}
    \begin{align}
        \rho_1&=7|p(0)B|^2, \label{eqn:def-rho1}\\
    \alpha_1&=\frac{21}{2}\left|\frac{1}{D}\zeta(y)B\int_0^{l(t)}\varphi(x-l(t))^\top dx\right|^2+28(p(0)Bp(l(t)))^2\nonumber \\
    +&21\left(\left(p(0)\left(1-\frac{a}{D}\right)+\dot{p}(0)\right)B\right)^2(\varphi(0)^\top)^2+28(p(l(t))A)^2\nonumber \\
    +&14\left(\left|\dot{p}(l(t))+\frac{a}{D}p(l(t))+\frac{r_{\rm g}}{D}e_1p(l(t))\right|BH^\top\right)^2, \label{eqn:alpha1}\\
    \alpha_2&=7\left(r_{\rm g}e_1 \dot{p}(l(t))+2k_n\left|\dot{p}(l(t))+\frac{a}{D}p(l(t))\right|B+p(l(t))\kappa\right)^2\\
    \alpha_3&=7\left(\frac{2k_n}{D}r_{\rm g}e_1p(l(t))B \right)^2+28\left(k_mp(l(t))\right)^2, \\
    \alpha_4&=21\left(\left(p(0)\left(1-\frac{a}{D}\right)+\dot{p}(0)\right)B\right)^2,\\
    \alpha_5&=7\left|\frac{1}{D}\zeta(y)B\right|^2\left(\frac{9}{2}+\frac{9}{2}\left(\int_{0}^{l(t)}\int_{x}^{l(t)}q(x,y)^2dydx\right)\right) \nonumber \\
    +&21\left(\left(p(0)\left(1-\frac{a}{D}\right)+\dot{p}(0)\right)B\right)^2\bar{G}(l(t))^2, \label{eqn:alpha5} \\
     &\zeta(y):=\int_0^{l(t)}D\ddot{p}(y)-a\dot{p}(y)+gp(y)-p(0)Bp(y)dy, \\
     &\bar{G}(l(t)):=\int_0^{l(t)}q(0,x)dx.
    \end{align}
    \end{small}
\end{remark}

\begin{theorem}\cite{demir2023event}
    For the event-triggered mechanism described in \eqref{eqn:def-Utj}-\eqref{eqn:def-S}, the set of event-times \(\{t_k^c\}_{k \in \mathbb{N}}\) ensures that the function \(\Gamma^c(t) := d(t)^2 - \gamma m(t)\) remains non-positive for all $t \in [t_k^c, t_{k+1}^c)$, where \(k \in \mathbb{N}\).
\end{theorem}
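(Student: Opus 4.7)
The proof plan is to proceed by induction on $k$ and to establish $\Gamma^c(t) \leq 0$ separately on each interval $[t_k^c, t_{k+1}^c)$. The argument relies on two ingredients: the refresh of the input-holding error $d(t)$ at every event time, and the infimum characterization of the next event time built into the rule \eqref{eqn:def-S}.

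First I would handle the left endpoint $t = t_k^c$. At this time, the sampled-data law \eqref{eqn:def-Utj} enforces $U_k^c(t_k^c) = U(t_k^c)$, so by \eqref{eqn:def-dt} the holding error vanishes, $d(t_k^c) = 0$, giving $\Gamma^c(t_k^c) = -\gamma\, m(t_k^c)$. Invoking $m(t_k^c) \geq 0$ (guaranteed by Remark \ref{remark1} up to and including $t_k^c$), one gets $\Gamma^c(t_k^c) \leq 0$, which supplies the base of the interval-by-interval induction.

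Next, for $t$ strictly inside $(t_k^c, t_{k+1}^c)$, I would argue by contradiction against the infimum structure of $S(t,t_k^c)$. Suppose there were a $t^\star \in (t_k^c, t_{k+1}^c)$ with $\Gamma^c(t^\star) > 0$. Then by \eqref{eqn:def-S}, $t^\star \in S(t,t_k^c)$, but this contradicts the minimality $t_{k+1}^c = \inf S(t,t_k^c)$. Continuity of $d(\cdot)$ on $[t_k^c, t_{k+1}^c)$ (which follows from the smoothness of the closed-loop PDE-ODE trajectory combined with the constancy of $U_k^c$ between events) and continuity of $m(\cdot)$ (as a solution of the ODE \eqref{eqn:def-m} with continuous forcing) make this contradiction argument legitimate, so that the non-positivity of $\Gamma^c$ extends from $t_k^c$ throughout the interval.

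The main obstacle I foresee is not the infimum argument itself, but the companion requirement that $m(t)$ remains strictly positive for all $t \in [0,\sup\{I^c\})$, since this is what makes the relation $\Gamma^c \leq 0$ a genuinely useful bound (rather than a vacuous one) and is needed at the very base of the induction. To secure this, I would substitute the trigger-enforced bound $d^2(t) \leq \gamma\, m(t)$ into \eqref{eqn:def-m} to get $\dot m(t) \geq -(\eta - \rho\gamma)\, m(t) - \beta_1 |X|^2 - \beta_2 |X|^4 - \beta_3 |X|^6 - \beta_4 |w(0,t)|^2 - \beta_5 \|w(\cdot,t)\|^2$, and then control the signed state-dependent forcing by the design choice \eqref{eqn:def-Bi} together with a Lyapunov-type bound on $|X|$, $|w(0,t)|$ and $\|w(\cdot,t)\|$ propagated from the closed-loop energy estimate. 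This coupling between the $m$-dynamics and the closed-loop state norms is the delicate part; the rest of the proof is essentially bookkeeping around the definition of the triggering rule.
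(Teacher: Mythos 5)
The paper itself does not prove this theorem (it defers to \cite{demir2023event}), so I am comparing your plan against the argument that the reference and the surrounding statements make clear. Your central mechanism is the right, standard one: by \eqref{eqn:def-Utj} and \eqref{eqn:def-dt} the holding error resets, $d(t_k^c)=0$, and for $t\in(t_k^c,t_{k+1}^c)$ the characterization $t_{k+1}^c=\inf S(t,t_k^c)$ already implies $t\notin S(t,t_k^c)$, so the triggering inequality fails there; you do not even need the contradiction/continuity scaffolding for the open part of the interval, only for handling the endpoint and the case $S=\emptyset$. Up to sign conventions, this is essentially the intended proof.

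The genuine gap is in the companion claim about the sign of $m(t)$, which you correctly flag as the load-bearing ingredient but then set up with the wrong convention and a reversed inequality. The paper's convention (item 2 after the theorem, the functional $V=V_1-m(t)$ in \eqref{w}, and the condition $d^2(t)>-\gamma m(t)$ in \eqref{eqn:def-S}) is $m(0)<0$ with $m(t)<0$ preserved, so that between events $d^2(t)\le -\gamma m(t)=\gamma|m(t)|$; the statement $m(t)>0$ in Remark \ref{remark1} is inconsistent with the rest of the paper. Under the correct convention, sign preservation is immediate by a first-crossing argument that needs no state estimates: at the first $t^*$ with $m(t^*)=0$, the maintained bound forces $d(t^*)=0$, and \eqref{eqn:def-m} then gives $\dot m(t^*)=-\beta_1|X|^2-\cdots-\beta_5\|w\|^2\le 0$, so $m$ cannot cross zero from below. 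Your version fails twice over: substituting $\rho d^2\le \rho\gamma m$ into \eqref{eqn:def-m} yields an \emph{upper} bound on $\dot m$, not the lower bound you wrote; and with your assumed $m>0$, the same first-crossing computation gives $\dot m(t^*)\le 0$ at $m(t^*)=0$, i.e., the nonpositive state-dependent terms in \eqref{eqn:def-m} push $m$ \emph{through} zero, so positivity is not preserved and no Lyapunov bound on $|X|$, $w(0,t)$, $\|w\|$ rescues it (indeed that would be circular, since $V$ itself uses $-m$). Relatedly, for your contradiction step to apply literally, membership in $S(t,t_k^c)$ must be the condition $\Gamma^c(t)>0$, whereas \eqref{eqn:def-S} as printed reads $d^2(t)+\gamma m(t)>0$, the pairing consistent with $m<0$. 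You should commit to the $m<0$ convention (with $\Gamma^c=d^2+\gamma m$, or equivalently redefine $m$ with the opposite sign throughout) and replace your final paragraph with the first-crossing argument.
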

The proof of this theorem and the following results are given in \cite{demir2023event} and the following hold:
\begin{enumerate}
    \item The set of event-times $\{t^c_{k}\}_{k\in\mathbb{N}}$ with triggering mechanism \eqref{eqn:def-Utj}-\eqref{eqn:def-S} and with the design parameters  specified in \eqref{eqn:def-rho1}-\eqref{eqn:alpha5}, ensures that Zeno behavior does not occur. This is because there exists a minimal dwell-time, $\tau>0$, between two execution times, given by
    \begin{align}
            \tau=\int_{0}^{1}\frac{1}{a_1s^2+a_2s+a_3}ds,
            \label{eqn:tau-new}
    \end{align}
    where 
    \begin{align}
        a_1&=\rho\sigma\gamma>0, \label{def:dat-cons1}\\
        a_2&=1+2\rho_1+(1-\sigma)\rho+\eta>0,  \\ 
        a_3&=(1+\rho_1+\gamma(1-\sigma)\rho+\eta)\frac{1-\sigma}{\sigma}>0. \label{def:tau-cons3}
    \end{align}
    \item Given  an initial condition $m(0)<0,$ the variable  $m(t)$ governed by \eqref{eqn:def-m}, satisfies $m(t)<0$  for all $t>0$.
    \item The closed-loop system \eqref{sys1}-\eqref{sys5}, along with the event-triggered mechanism \eqref{eqn:def-Utj}, locally exponentially converges to the desired axon length in the $L^2$-sense.
\end{enumerate}

In the next section, we propose a periodic event-triggering mechanism.

\vspace{-0.25em}

\section{Periodic Event Triggering Mechanism}
In this section, we propose a periodic event-triggering mechanism for axonal growth.

\begin{definition} Consider the event-triggering function $\Gamma^p(t)$, which undergoes periodic evaluation with a period of $h > 0$. The PETC  that generates the events are characterized by two parts:
\begin{enumerate}
    \item The event-trigger mechanism: A periodic event-trigger that determines the event times
    {
\setlength{\abovedisplayskip}{1pt}
\setlength{\belowdisplayskip}{1pt}
    \begin{align}
         t_{k+1}^p=\inf\{t\in \mathbb{R}_+|t> t_k^p,&~\Gamma^p(t)>0,~t=nh,\nonumber \\ &~h>0,~n\in\mathbb{N}\},
        \label{eqn:PETC-con}
    \end{align}}
with $t_0^p=0$ where $h$ is sampling period and 
\begin{align}
    \Gamma^p(t)=\upsilon_1d^2(t)-\upsilon_2m(t)
\end{align}
where $\upsilon_1>0$ and $\upsilon_2>0$.
\item The feedback control law that is derived as
    {
\setlength{\abovedisplayskip}{1pt}
\setlength{\belowdisplayskip}{1pt}
\begin{align}
     U_k^p(t)=&-\frac{1}{D}\int_{0}^{l(t_k^p)}p(x)Bu(x,t_k^p)dx+p(l(t_k^p))X(t_k^p)
     \label{eqn:def-U-tildetj}
 \end{align}}
for all $t\in [t_k^p,t_{k+1}^p)$ for $k\in\mathbb{N}$.
\end{enumerate}
\end{definition}
Note that periodicity in the triggering conditions \eqref{eqn:PETC-con}, allows us to monitor the triggering function \emph{periodically} and update the control laws \emph{aperiodically,} removing the continuous monitoring of the PDE-ODE state variables. Then, the boundary condition \eqref{eqn:def-U-cont} becomes 
\begin{align}
    u_x(0,t)+u(0,t)=U(t_k^p).
    \label{eqn:PETC-bnd}
\end{align}

\vspace{-0.5em}
\subsection{Design of the periodic event triggering function $\Gamma^p(t)$}
\textbf{Selection of the sampling period.} 
The sampling period, denoted as $h$, represents the unit of time during which the control input is updated. Let the periodic event-triggered function given  by \eqref{eqn:PETC-con}, along with the boundary condition in \eqref{eqn:PETC-bnd} and the plant dynamics from \eqref{sys1}-\eqref{sys5}, satisfy the condition $\Gamma^p(t) \leq 0$ for all $t$ within the interval $t\in[t_k^p, t_{k+1}^p)$ for $k\in\mathbb{N}$. Hence, it follows that  $m(t)<0$ for all $t>0$. The parameter $h$ is selected to satisfy
\begin{align}
    0<h\leq \tau,
    \label{eqn:def-samp}
\end{align}
where the upper bound, $\tau$, is the minimum inter-event time of the CETC design defined in  \eqref{eqn:tau-new}-\eqref{def:tau-cons3}.

\vspace{-0.25em}

\vspace{-0.25em}

\begin{proposition}
    Under the definition of the periodic event-triggered boundary control \eqref{eqn:PETC-bnd}, with the sampling period $h<\tau$, it holds that 
        {
\setlength{\abovedisplayskip}{1pt}
\setlength{\belowdisplayskip}{1pt}
    \begin{align}
        \Gamma^c(t)\leq \frac{1}{q}\bigg((a+\gamma\rho)d^2(nh)&e^{q(t-nh)}-\gamma\rho d^2(nh)\nonumber \\
        &+q\gamma m(nh) e^{-\eta(t-nh)}\bigg), 
        \label{eqn:lemma2}
    \end{align} }
    for all $t\in [nh,(n+1)h)$ and any $n\in[t_k^p/h,t_{k+1}^p/h)\subset \mathbb{N}$, where $q=1+\eta+\rho_1$    and  $\Gamma^c(t)=d^2(t)-\gamma m(t)$ for $\gamma>0$.
\end{proposition}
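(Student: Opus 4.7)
The plan is to work on one sampling sub-interval $[nh,(n+1)h)$ contained in the event interval $[t_k^p, t_{k+1}^p)$, where the sampled-data control $U_k^p$ is held constant, and to combine the bound from Remark~2 with the $m$-ODE \eqref{eqn:def-m} into a single dissipation inequality that can be solved via variation of constants on this interval.

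First, I invoke Remark~2 to get $(\dot d)^2 \leq \rho_1 d^2 + \Lambda$, where $\Lambda := \alpha_1 X^2 + \alpha_2 X^4 + \alpha_3 X^6 + \alpha_4 w(0,\cdot)^2 + \alpha_5 \|w\|^2 \geq 0$, and apply Young's inequality $2 d \dot d \leq d^2 + (\dot d)^2$ to obtain $\frac{d}{dt}d^2 \leq (1+\rho_1) d^2 + \Lambda$. The key structural observation is the design identity $\alpha_i = \gamma(1-\sigma)\beta_i$ in \eqref{eqn:def-Bi}: writing the non-negative source of $\dot m$ as $B := \beta_1 X^2 + \cdots + \beta_5 \|w\|^2$, we have $\Lambda = \gamma(1-\sigma) B \leq \gamma B$, while the $m$-ODE itself gives $B = \rho d^2 - \dot m - \eta m$. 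Substituting this expression for $B$ eliminates the unknown state-dependent quantities and yields $\frac{d}{dt}d^2 + \gamma\dot m \leq (1+\rho_1+\gamma\rho) d^2 - \gamma\eta m$. Introducing $\Theta(t) := d^2(t) + \gamma m(t)$ and using $\gamma\eta m = \eta\Theta - \eta d^2$, this collapses to the compact dissipation inequality $\dot\Theta + \eta\Theta \leq (q+\gamma\rho) d^2$ with $q=1+\eta+\rho_1$, exactly matching the coefficient in the statement.

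I would then integrate this inequality by variation of constants from $nh$ to $t$, obtaining $\Theta(t) \leq \Theta(nh) e^{-\eta(t-nh)} + (q+\gamma\rho)\int_{nh}^t d^2(s)\, e^{-\eta(t-s)}\,ds$, and separately derive a companion Gronwall majorant of the form $d^2(s) \leq d^2(nh) e^{q(s-nh)}$ by re-using the same substitution and absorbing the favourable $-\gamma\dot m$, $-\gamma\eta m$ terms onto the left. Substituting the majorant and evaluating $\int_{nh}^t e^{q(s-nh)} e^{-\eta(t-s)}\,ds = (e^{q(t-nh)}-e^{-\eta(t-nh)})/(q+\eta)$ turns the convolution into a linear combination of the two exponential modes. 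Translating the resulting $\Theta$ bound back to $\Gamma^c = d^2 - \gamma m$ via $\Gamma^c = 2d^2 - \Theta$ and factoring out $1/q$ assembles into the stated form: a leading $e^{q(t-nh)}$ growth mode with coefficient proportional to $q+\gamma\rho$, a constant compensator $-\gamma\rho d^2(nh)$, and a $\gamma m(nh) e^{-\eta(t-nh)}$ relic inherited from the $m$-ODE.

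The hard part will be the appearance of the state-dependent non-negative term $B$ (equivalently $\Lambda$) in both the $d^2$ growth bound and the $m$-ODE with opposite signs, since $B$ depends on the plant states $X$ and $w$ and has no direct a~priori bound. This is resolved by the tuning $\alpha_i = \gamma(1-\sigma)\beta_i$, calibrated in the CETC synthesis of \cite{demir2023event} precisely so that $B$ can be eliminated in a combined Lyapunov-type inequality for $\Theta=d^2+\gamma m$. A secondary subtlety is that the proposition's right-hand side mixes two distinct exponential modes---the growth $e^{q(t-nh)}$ tracking the input-holding-error accumulation between events and the decay $e^{-\eta(t-nh)}$ inherited from $m$---which must emerge cleanly from a single Gronwall convolution and be regrouped to match the $1/q$-factored form written in the statement.
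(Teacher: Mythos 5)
Your opening moves are sound and in fact mirror the paper's: the Young bound $\tfrac{d}{dt}d^2(t)\le(1+\rho_1)d^2(t)+\Lambda(t)$, the identification $\alpha_i=\gamma(1-\sigma)\beta_i$ so that $\Lambda\le\gamma B$, and the use of the $m$-dynamics to trade the state-dependent source for $d^2$, $m$, $\dot m$. But the proof cannot be completed along the route you describe, for two reasons. First, the companion majorant $d^2(s)\le d^2(nh)e^{q(s-nh)}$ does not follow: after your substitution you are left with $\tfrac{d}{dt}d^2\le(1+\rho_1+\gamma\rho)d^2-\gamma\dot m-\gamma\eta m$, and the terms $-\gamma\dot m$ and $-\gamma\eta m$ are not sign-definite (integrating $-\gamma\dot m$ produces $\gamma\left(m(nh)-m(s)\right)$, which need not be $\le 0$), so they cannot simply be ``absorbed onto the left''; and even if they could, the resulting Gronwall rate would be $1+\rho_1+\gamma\rho$, not $q=1+\eta+\rho_1$. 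Second, and more structurally, your final translation $\Gamma^c=2d^2-\Theta$ requires a \emph{lower} bound on $\Theta=d^2+\gamma m$, whereas variation of constants applied to $\dot\Theta+\eta\Theta\le(q+\gamma\rho)d^2$ only delivers an \emph{upper} bound. The inequality points the wrong way, so a single scalar dissipation inequality for $\Theta$ cannot yield an upper bound on $\Gamma^c=d^2-\gamma m$.

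The paper avoids exactly this obstruction by not collapsing to one scalar quantity: it writes the pair $r=(\Gamma^c,m)^\top$ as an exact two-dimensional linear ODE $\dot r=A_1r+v$ (introducing a slack function $\iota\ge0$ to convert the differential inequality into an equality), in which the state-dependent sources enter $\dot\Gamma^c$ with coefficients $-(\gamma\beta_i-\alpha_i)<0$ and enter $\dot m$ with $+\beta_i$. It then solves with the matrix exponential --- $A_1$ has eigenvalues $1+\rho_1=q-\eta$ and $-\eta$, which is where your two exponential modes actually originate --- and verifies $(\gamma\beta_i-\alpha_i)g_1(t-\xi)-\beta_i g_2(t-\xi)>0$ for $t-\xi\in[0,h]\subseteq[0,\tau]$, so that the entire forced response of $\Gamma^c$ is non-positive and can be discarded. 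The claimed estimate is then just the zero-input response $C_1e^{A_1(t-nh)}r(nh)$ with $\Gamma^c(nh)=d^2(nh)-\gamma m(nh)$ substituted. If you wish to retain your elimination idea, you must still carry $m$ (or $\Gamma^c$) as a second coupled state; a single Gronwall--convolution argument on $\Theta$ is not sufficient.
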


\vspace{-0.25em}

\begin{proof}
    Taking the time derivative of $\Gamma^c(t)$ in $t\in [nh,(n+1)h)$ and $n\in[t_k^p/h,t_{k+1}^p/h)\subset \mathbb{N}$, one can show that
            {
\setlength{\abovedisplayskip}{1pt}
\setlength{\belowdisplayskip}{1pt}
    \begin{align}
        \dot{\Gamma}^c(t)&=2d(t)\dot{d}(t)-\gamma\dot{m}(t) \\
        &\leq d^2(t)+\dot{d}^2(t)-\gamma \dot{m}(t).
    \end{align}}
\noindent By using Lemma 1, we get
\begin{align}
    \dot{\Gamma}^c(t)\leq& (1+\rho_1+\gamma\rho)d^2(t)-\left(\gamma\beta_1-\alpha_1\right)X(t)^2\nonumber \\
    &-\left(\gamma\beta_2-\alpha_2\right)X(t)^4-\left(\gamma\beta_3-\alpha_3\right)X(t)^6\nonumber \\
    &-\left(\gamma\beta_4-\alpha_4\right)u(0,t)^2-\left(\gamma\beta_5-\alpha_5\right)||u(x,t)||^2 \nonumber \\
    &+\eta\gamma m(t).
\end{align}
By using the definition of $\Gamma^c(t)$, we get
\begin{align}
    \dot{\Gamma}^c(t)\leq& (1+\rho_1+\gamma\rho)\Gamma^c(t)-\left(\gamma\beta_1-\alpha_1\right)X(t)^2\nonumber \\
    &-\left(\gamma\beta_2-\alpha_2\right)X(t)^4-\left(\gamma\beta_3-\alpha_3\right)X(t)^6\nonumber \\
        &-\left(\gamma\beta_4-\alpha_4\right)u(0,t)^2-\left(\gamma\beta_5-\alpha_5\right)||u(x,t)||^2\nonumber\\
        &+\left(\left(1+\rho_1+\gamma\rho\right)\gamma+\eta\gamma\right) m(t).
        \label{eqn:gammafun}
\end{align}
Since $m(t)$ satisfies Remark \ref{remark1} and \eqref{sys1}-\eqref{sys5} with the event-triggered control law \eqref{eqn:def-Utj} is locally exponentially convergen, \eqref{eqn:gammafun} exhibit smooth behavior in the interval $t\in [nh,(n+1)h)$ and for any $n\in[t_k^p/h,t_{k+1}^p/h)\subset \mathbb{N}$. This establishes the existence of a non-negative function $\iota(t)\in C^0((t_k^p,t_{k+1}^p);\mathbb{R}_+)$ such that:
\begin{align}
    \dot{\Gamma}^c(t)=& (1+\rho_1+\gamma\rho)\Gamma(t)-\left(\gamma\beta_1-\alpha_1\right)X(t)^2\nonumber \\
    &-\left(\gamma\beta_2-\alpha_2\right)X(t)^4-\left(\gamma\beta_3-\alpha_3\right)X(t)^6\nonumber \\
        &-\left(\gamma\beta_4-\alpha_4\right)u(0,t)^2-\left(\gamma\beta_5-\alpha_5\right)||u(x,t)||^2\nonumber\\
        &+\left(\left(1+\rho_1+\gamma\rho\right)\gamma+\eta\gamma\right) m(t)-\iota(t),
        \label{eqn:gammadotdef}
\end{align}
for all $t\in [nh,(n+1)h)$ and for any $n\in[t_k^p/h,t_{k+1}^p/h)\subset \mathbb{N}$. Moreover, through the substitution of $d^2(t)=\Gamma^c(t)+\gamma m(t)$, we can rephrase the dynamics of $m(t)$ as follows:
        {
\setlength{\abovedisplayskip}{1pt}
\setlength{\belowdisplayskip}{1pt}
\begin{align}
    \dot{m}(t)=&-\rho\Gamma^c(t)-\left(\eta +\rho\gamma\right)m(t)+\beta_1 X(t)^2+\beta_2X(t)^4\nonumber \\
    &+\beta_3X(t)^6+\beta_4|u(0,t)|^2+\beta_5 ||u(x,t))||^2,
    \label{eqn:mdotdef}
\end{align}}
for all $t\in [nh,(n+1)h)$ and for any $n\in[t_k^p/h,t_{k+1}^p/h)\subset \mathbb{N}$. Subsequently, by combining \eqref{eqn:gammadotdef} with \eqref{eqn:mdotdef}, we can derive the subsequent system of ODEs:
        {
\setlength{\abovedisplayskip}{1pt}
\setlength{\belowdisplayskip}{1pt}
\begin{align*}
    \dot{r}(t)=A_1r(t)+v(t),
\end{align*}}
where
\begin{align}
    &r(t)=\left[\begin{array}{c}
         \Gamma^c(t)  \\
          m(t)
    \end{array}\right], A_1=\left[\begin{array}{cc}
         q-\eta+\gamma\rho & \gamma\left(q+\gamma\rho\right)\\
          -\rho & -\eta -\rho\gamma
    \end{array}\right], \nonumber \\
    &v(t)=\left[\begin{array}{c}
         f_1(t) \\
         f_2(t)
    \end{array}\right] ,
    \label{eqn:ltv-ODE}
\end{align}
where
\begin{align}
    f_1(t)=&-\left(\gamma\beta_1-\alpha_1\right)X(t)^2-\left(\gamma\beta_2-\alpha_2\right)X(t)^4\nonumber \\
    &-\left(\gamma\beta_3-\alpha_3\right)X(t)^6-\left(\gamma\beta_4-\alpha_4\right)u(0,t)^2\nonumber \\
        & -\left(\gamma\beta_5-\alpha_5\right)||u(x,t)||^2-\iota(t), \\
    f_2(t)=&\beta_1 X(t)^2+\beta_2X(t)^4+\beta_3X(t)^6+\beta_4|u(0,t)|^2 \nonumber \\
    &+\beta_5 ||u(x,t))||^2,
\end{align}
and 
    {
\setlength{\abovedisplayskip}{1pt}
\setlength{\belowdisplayskip}{1pt}
\begin{align}
    q=1+\eta+\rho_1.
    \label{eqn:def-q}
\end{align}}
The solution to \eqref{eqn:ltv-ODE} for all $t\in [nh,(n+1)h)$ and for any $n\in[t_k^p/h,t_{k+1}^p/h)\subset \mathbb{N}$ can be expressed as:
    {
\setlength{\abovedisplayskip}{1pt}
\setlength{\belowdisplayskip}{1pt}
\begin{align}
    r(t)=e^{A_1(t-nh)}r(nh)+\int_{nh}^{t}e^{A_1(t-\xi)}v(\xi)d\xi,
\end{align}}
which gives us
    {
\setlength{\abovedisplayskip}{1pt}
\setlength{\belowdisplayskip}{1pt}
\begin{align}
    \Gamma^c(t)=C_1e^{A_1(t-nh)}r(nh)+C_1\int_{nh}^{t}e^{A_1(t-\xi)}v(\xi)d\xi,
    \label{eqn:gamma-explicit}
\end{align}}
where $C_1=\left[
         1 \quad 0
    \right]$. Since matrix $A_1$ has two distinct eigenvalues, we can diagonalize the matrix exponential $e^{A_1t}$ as it is defined in \cite{rathnayake2023periodic}. Thus, we can derive the second part of \eqref{eqn:gamma-explicit} as
        {
\setlength{\abovedisplayskip}{1pt}
\setlength{\belowdisplayskip}{1pt}
\begin{align}
    C_1e^{A_1(t-\xi)}v(\xi)=\left[\begin{array}{cc}
         g_1(t) & g_2(t) 
    \end{array}\right]\left[\begin{array}{c}
         f_1(t) \\ f_2(t) 
    \end{array}\right],
    \label{eqn:LTVgamma}
\end{align}}
where
    {
\setlength{\abovedisplayskip}{1pt}
\setlength{\belowdisplayskip}{1pt}
\begin{align}
    g_1(t)&=\frac{q+\gamma\rho}{q}e^{(1+\rho_1)t}-\frac{\rho\gamma}{q} e^{-\eta t } , \\
    g_2(t)&=\frac{\gamma\left(q+\gamma\rho\right)}{q}\left(e^{(1+\rho_1)t}-e^{-\eta t }\right).
\end{align}}
Since we have the following relationship
        {
\setlength{\abovedisplayskip}{1pt}
\setlength{\belowdisplayskip}{1pt}
\begin{align}
    1+\eta+7|p(0)B|^2>0,
\end{align}}
we can get
    {
\setlength{\abovedisplayskip}{1pt}
\setlength{\belowdisplayskip}{1pt}
\begin{align}
    g_1(t)&=\frac{1}{q}\left(-\gamma\rho+(q+\rho\gamma)e^{qt}\right)e^{-\eta t},\\
    g_2(t)&=\frac{\gamma(q+\gamma\rho)}{q}\left(-1+e^{qt}\right)e^{-\eta t}.
\end{align}}
It's apparent that $g_1(t)$ remains positive for $t>0$. Furthermore, considering the relation
\eqref{eqn:def-Bi}, and using ascending order of triggering times that is the solution of \eqref{eqn:tau-new} is represented by
        {
\setlength{\abovedisplayskip}{1pt}
\setlength{\belowdisplayskip}{1pt}
\begin{align}
    \tau=\frac{1}{q}\ln{\left(1+\frac{\sigma q}{(1-\sigma)(q+\gamma\rho)}\right)},
\end{align} }
one can show that
        {
\setlength{\abovedisplayskip}{1pt}
\setlength{\belowdisplayskip}{1pt}
\begin{small}
\begin{align}
    C_1&e^{A_1(t-\xi)}v(\xi)=\frac{\alpha_1(q+\gamma\rho)}{q}\left(e^{q\tau}-e^{q(t-\xi)}\right)e^{-\eta(t-\xi)}X(t)^2 \nonumber \\
    &+\frac{\alpha_2(q+\gamma\rho)}{q}\left(e^{q\tau}-e^{q(t-\xi)}\right)e^{-\eta(t-\xi)}X(t)^4\nonumber \\
    &+\frac{\alpha_3(q+\gamma\rho)}{q}\left(e^{q\tau}-e^{q(t-\xi)}\right)e^{-\eta(t-\xi)}X(t)^6\nonumber \\
    &+\frac{\alpha_4(q+\gamma\rho)}{q}\left(e^{q\tau}-e^{q(t-\xi)}\right)e^{-\eta(t-\xi)}u(0,t)^2\nonumber \\
    &+\frac{\alpha_5(q+\gamma\rho)}{q}\left(e^{q\tau}-e^{q(t-\xi)}\right)e^{-\eta(t-\xi)}||u(x,t)||^2.
    \label{eqn:ltv-last}
\end{align}
\end{small}
}
\noindent Given the stipulated intervals $nh\leq \xi \leq t \leq (n+1)h$, and $h\leq\tau$, upon thorough examination of \eqref{eqn:ltv-last}, it emerges that the inequality $(\gamma\beta_i-\alpha_i)g_1(t-\xi)-\beta_ig_2(t-\xi)>0$ satisfied for all $i=1,2,3,4,5$. This observation prompts us to  establish $C_1e^{A_1(t-\xi)}v(\xi)$ which holds for all $t$ and $\xi$ within the range of $nh\leq \xi \leq t \leq (n+1)h$, and for $n\in[t_k^p/h,t_{k+1}^p/h)\subset \mathbb{N}$. Taking this observation into account alongside \eqref{eqn:gamma-explicit}, we can derive the following expression for $t\in[nh,(n+1)h)$:
\begin{align}
    \Gamma^c(t)
    &\leq \frac{1}{q}\left(-\gamma(q+\gamma\rho)m(nh)-\gamma\rho\Gamma^c(nh)\right.\nonumber \\
    &\left.+(q+\gamma\rho)\left(\Gamma^c(nh)+\gamma m(nh)\right)e^{q(    t-nh)}\right).
    \label{eqn:Gammat-last}
\end{align}
Upon performing the substitution $\Gamma^c(nh)$ into \eqref{eqn:Gammat-last}, we are able to derive the inequality \eqref{eqn:lemma2} which is valid for all $t \in [nh, (n+1)h)$. This concludes the proof.
\end{proof}
Building upon Lemma 2, the update time for the control input can be determined by identifying when the subsequent condition is met for any $t\in[nh,(n+1)h)$, thereby challenging the positive definiteness of $\Gamma^c(t)$.

\begin{align}
(q+\gamma\rho)d^2(nh)e^{q(t-nh)}-\gamma\rho d^2(nh)+q\gamma m(nh)>0,
\end{align}

\noindent Thus, one can choose this condition as $\Gamma^p(t)$ such that
\begin{align}
    \Gamma^p(t)=(q+\gamma\rho)e^{qh}d^2(t)-\gamma\rho d^2(t)+q\gamma m(t),
    \label{eqn:def-Tilde-Gammat}
\end{align}
which completes the design process.

\begin{theorem}Let the design parameters, $\rho$, $\rho_1$ and $\beta_i$ as defined in \eqref{eqn:def-Bi}, \eqref{eqn:def-rho1}-\eqref{eqn:alpha5}, set the sampling rate in accordance with \eqref{eqn:def-samp}, let $\gamma,\eta>0$ and $\sigma\in (0,1)$. Let us consider the periodic event-triggering mechanism \eqref{eqn:PETC-con}-\eqref{eqn:def-U-tildetj} with the $\Gamma^p(t)$ as defined in \eqref{eqn:def-Tilde-Gammat} which generates the increasing sequence of times $\{t_k^p\}_{k\in\mathbb{N}}$ with $t_0^p=0$. Then, for $\Gamma^c(t)$ and $m(t)$ with $m(t)>0$, it holds that $\Gamma^c(t)\leq 0$ and $m(t)> 0$ for all $t>0$.
\end{theorem}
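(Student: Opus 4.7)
The plan is to proceed by induction on the sampling indices $n \in \mathbb{N}$, establishing jointly that $\Gamma^c(t) \leq 0$ and $m(t) > 0$ on each window $[nh,(n+1)h)$, so that the conclusion holds for all $t > 0$. The core observation is that the PETC trigger $\Gamma^p(nh)$ is designed precisely to upper-bound the Proposition 1 estimate of $\Gamma^c(t)$ over the ensuing sampling interval: the factor $e^{qh}$ appearing in \eqref{eqn:def-Tilde-Gammat} captures the worst-case growth of $e^{q(t-nh)}$ for $t \in [nh,(n+1)h)$, and together with the choice $h \le \tau$ this prevents the estimate from becoming positive between two consecutive inspections.

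The base case at $n = 0$ is immediate, since $t_0^p = 0$ forces $d(0) = U(0) - U_0^p(0) = 0$ and hence $\Gamma^c(0) = -\gamma m(0) < 0$, while $m(0) > 0$ is assumed. For the inductive step, I would assume $\Gamma^c(t) \leq 0$ and $m(t) > 0$ on $[0,nh]$, and then split according to whether $nh$ is a triggering instant. If no event fires at $nh$, the PETC rule \eqref{eqn:PETC-con} gives $\Gamma^p(nh) \leq 0$; combining Proposition 1 with the monotonicity estimates $e^{q(t-nh)} \leq e^{qh}$ and $e^{-\eta(t-nh)} \leq 1$ (valid as upper bounds because $m(nh)>0$) yields $q\Gamma^c(t) \leq \Gamma^p(nh) \leq 0$ throughout $[nh,(n+1)h)$. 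If an event does fire, the update in \eqref{eqn:def-U-tildetj} enforces $d(nh^+) = 0$, collapsing the $d^2$-contribution in Proposition 1 to zero, and the residual $m$-term is controlled through the exponential decay factor $e^{-\eta(t-nh)}$ combined with the sign of $m(nh)$.

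To propagate $m(t) > 0$, I would exploit the inequality $d^2 \leq \gamma m$ implied by $\Gamma^c \leq 0$, which turns the ODE \eqref{eqn:def-m} into a differential inequality of the form $\dot m \geq -\eta m + \rho d^2 - (\text{sign-definite state terms})$. Local boundedness of the closed-loop trajectory, inherited from the local exponential stability of the CETC scheme \cite{demir2023event} together with $h \leq \tau$, keeps the negative state terms small on a neighborhood of the equilibrium, and a Gronwall-type comparison argument then shows $m$ cannot reach zero from above within one sampling window. Combining this with the interval-wise bound on $\Gamma^c$ closes the joint induction on $[nh,(n+1)h)$ and delivers the statement on all of $\mathbb{R}_+$.

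The hard part will be reconciling the decaying $e^{-\eta(t-nh)}$ factor appearing on $m(nh)$ in the Proposition 1 estimate with the PETC function $\Gamma^p$, which uses the worst-case replacement corresponding to $e^{-\eta(t-nh)} = 1$. This replacement is only a legitimate upper bound when $m(nh) > 0$, so the positivity of $m$ cannot be deduced after the fact but must be carried along as part of the induction hypothesis. The joint structure $\Gamma^c \leq 0 \Rightarrow d^2 \leq \gamma m$, together with $m > 0$ sanctioning the monotone majorization of the Proposition 1 bound and the PETC inspection at $nh$ ruling out sign-flips before $(n+1)h$, is what makes the argument go through.
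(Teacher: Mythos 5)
First, a point of reference: the paper does not actually supply a proof of this theorem --- it defers entirely to the proof of Theorem 2 in \cite{rathnayake2023periodic}. Your overall skeleton (induction over the sampling windows $[nh,(n+1)h)$, using the inter-sample estimate \eqref{eqn:lemma2} of Proposition 1, and reading $\Gamma^p$ in \eqref{eqn:def-Tilde-Gammat} as the worst-case majorant of that estimate via $e^{q(t-nh)}\le e^{qh}$ together with $h\le\tau$) is exactly the intended argument, so the structure is the right one.

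There is, however, a genuine gap in the step that propagates $m(t)>0$. With \eqref{eqn:def-m} as written, the state-dependent terms enter with a \emph{negative} sign: $\dot m=-\eta m+\rho d^2-\beta_1X^2-\cdots-\beta_5\|w\|^2$. At a first hypothetical zero of $m$, say $t^*$, the bound $d^2\le\gamma m$ (which you correctly extract from $\Gamma^c\le 0$) forces $d(t^*)=0$, so $\dot m(t^*)=-\beta_1X(t^*)^2-\cdots\le 0$ and $m$ crosses zero downward. No appeal to ``local boundedness keeping the negative state terms small'' can rescue this: a nonpositive forcing term, however small, still gives $\dot m\le 0$ at the crossing, and your Gronwall comparison only yields $m(t)\ge m(nh)e^{-\eta(t-nh)}-\eta^{-1}\epsilon\bigl(1-e^{-\eta(t-nh)}\bigr)$, i.e.\ positivity for a limited time, not for all $t>0$. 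The same sign problem silently voids your ``no event fires'' branch: with $m(nh)>0$ and $(q+\gamma\rho)e^{qh}>\gamma\rho$, every term of $\Gamma^p(nh)=(q+\gamma\rho)e^{qh}d^2(nh)-\gamma\rho d^2(nh)+q\gamma m(nh)$ is nonnegative, so $\Gamma^p(nh)\le 0$ can never hold and the induction never advances. The root cause is that \eqref{eqn:def-m}, the trigger set \eqref{eqn:def-S}, and item 2) after Theorem 1 all use the convention $m<0$ (as does the simulation, $m(0)=-0.5$), whereas Remark 1 and the present theorem statement use $m>0$; the closure that actually works is the one-sided comparison $\rho d^2\le-\rho\gamma m$, hence $\dot m\le-(\eta+\rho\gamma)m$ and $m(t)\le m(0)e^{-(\eta+\rho\gamma)t}<0$ (equivalently, flip the sign of $m$ and of the $\beta_i$- and $\gamma m$-terms consistently through Proposition 1 and \eqref{eqn:def-Tilde-Gammat}). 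Until you commit to one convention and carry it through both the $m$-dynamics and the definition of $\Gamma^p$, the joint induction you describe does not close.
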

\begin{proof}
Due to space constraints, we omit this proof, which can be stated following the steps of the proof of Theorem 2 in \cite{rathnayake2023periodic}.
\end{proof}
\color{black}

\vspace{-0.5em}

\subsection{Local exponential convergence under PETC}
In order to prove that the closed-loop system \eqref{sys1}-\eqref{sys5} with the control law \eqref{eqn:def-Utj} and the periodic event-triggering mechanism \eqref{eqn:PETC-con} and \eqref{eqn:def-Tilde-Gammat}, is locally exponentially convergent, we first obtain the following target system  by applying transformation \eqref{bkst}  

        {
\setlength{\abovedisplayskip}{1pt}
\setlength{\belowdisplayskip}{1pt}
\begin{small}
\begin{align} 
\label{tar-PDE-nonlin}  &w_t (x,t) = D w_{xx} (x,t) - a w_x (x,t) - g w(x,t) - \dot l(t) F(x,X(t))\notag\\
& \quad \quad \quad -\phi(x-l(t))^\top f(X(t))-G(x,l(t))h^*(X), \\
\label{tar-BC1-nonlin} &w_x(0,t)+w(0,t) = d(t)-\frac{1}{D}\left(H-\epsilon\right)^\top Bu(0,t),\\
\label{tar-BC2-nonlin}&w(l(t),t) =h^*(X(t))+\epsilon^\top X(t) , \\
\label{tar-ODE-nonlin} &\dot{X}(t) =  (A + BK) X(t) +f(X(t)) + B w_x (l(t), t),  
 \end{align}
 \end{small}}
 
\noindent where $G(x,l(t)):=\left(\phi'(x-l(t))^\top +\frac{a}{D}\phi(x-l(t))^\top \right) B.$  Using the  transformation below 
        {
\setlength{\abovedisplayskip}{1pt}
\setlength{\belowdisplayskip}{1pt}
\begin{align}
    \varpi(x,t)=w(x,t)-h^*(X(t))
    \label{eqn:def-2ndtrans}
\end{align}}
converts
\eqref{tar-PDE-nonlin}-\eqref{tar-ODE-nonlin} into
        {
\setlength{\abovedisplayskip}{1pt}
\setlength{\belowdisplayskip}{1pt}
\begin{align}
\label{eqn:varpi1}
    &\varpi_t(x,t)=D \varpi_{xx} (x,t) - a \varpi_x (x,t) - g \varpi(x,t) \nonumber \\
&\quad   +gh^*(X(t)) - \dot l(t)F(x,X(t))-\dot{h}^*(X(t)) B \varpi_x (l(t), t)  \nonumber \\
&\quad   -\phi(x-l(t))^\top f(X(t)) -G(x,l(t))h^*(X) \nonumber \\
&\quad  -\dot{h}^*(X(t))\left((A + BK) X(t) +f(X(t))\right), \\
&\varpi_x(0,t)+\varpi(0,t)=d(t)-\frac{1}{D}\left(H-\epsilon\right)^\top Bu(0,t)\nonumber \\
&\quad \quad \quad \quad \quad \quad \quad \quad +h^*(X(t)), \\
&\varpi(l(t),t)=\epsilon^\top X(t), \\
&\dot{X}(t)=(A + BK) X(t) +f(X(t)) + B \varpi_x (l(t), t).
\label{eqn:varpiend}
\end{align}}
Below, we state the convergence result.

\vspace{-0.5em}

\begin{theorem}
Let the design parameters, $\rho$, $\rho_1$ and $\beta_i$ given as defined in Theorem 2. Consider the periodic event-triggering rule \eqref{eqn:PETC-con}-\eqref{eqn:def-U-tildetj} with the periodic event-triggering function \eqref{eqn:def-Tilde-Gammat} and sampling rate $h$  defined in \eqref{eqn:def-samp}, which generates an increasing event-times $\{t_k^p\}_{k\in\mathbb{N}}$. Assuming the well-posedness, the closed-loop system of \eqref{sys1}-\eqref{sys5} with the boundary control law \eqref{eqn:def-Tilde-Gammat} and \eqref{eqn:def-px} is locally exponentially convergent in $L^2$-norm sense.
\end{theorem}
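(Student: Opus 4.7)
The plan is to build a Lyapunov functional for the secondary target system \eqref{eqn:varpi1}--\eqref{eqn:varpiend} and close the argument using the positivity of $m(t)$ together with the event-triggering bound $d^2(t)\le\gamma m(t)$ guaranteed by Theorem 2. A natural candidate is
\[
V(t)=\tfrac{\mu_1}{2}\int_0^{l(t)}e^{-\delta x}\varpi(x,t)^2\,dx+\tfrac{\mu_2}{2}X(t)^\top P X(t)+\mu_3\, m(t),
\]
where $P\succ 0$ solves a Lyapunov equation associated with the Hurwitz matrix appearing in the ODE of \eqref{tar-ODE-nonlin}, and $\delta,\mu_1,\mu_2,\mu_3>0$ are weights to be fixed. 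The weighted spatial term delivers $L^2$ dissipation that absorbs the advection term $-a\varpi_x$ while keeping a controlled positive residue near $x=0$; the $m$-term is essential because the sampled-data emulation produces the perturbation $d(t)$ in the boundary condition \eqref{tar-BC1-nonlin}, and $m(t)$ majorizes $d^2(t)$ by construction.

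Differentiating $V$ along trajectories of \eqref{eqn:varpi1}--\eqref{eqn:varpiend}, I would integrate by parts the $D\varpi_{xx}$ term to expose $-D\int_0^{l(t)}e^{-\delta x}\varpi_x^2\,dx$ plus boundary residues at $x=0$ and $x=l(t)$. The $x=0$ residue couples $\varpi(0,t)$, $d(t)$, $h^*(X)$, and $u(0,t)$; Young's inequality splits these and leaves a positive multiple of $d^2(t)$ to be absorbed by the $-\mu_3\eta m(t)+\mu_3\rho d^2(t)$ contribution coming from $\dot m$ via \eqref{eqn:def-m}. At $x=l(t)$ the trace is replaced by $\epsilon^\top X(t)$, and the critical cross-term $B\varpi_x(l(t),t)$ in $\dot X^\top P X$ is traded against a small fraction of the PDE dissipation by Young's inequality, the standard backstepping-emulation maneuver; this fixes the ratio $\mu_1/\mu_2$. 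The moving-boundary contribution $-\dot l(t)F(x,X)$, the correction $-\dot h^*(X) B\varpi_x(l(t),t)$, and the nonlinear terms $-\phi^\top f(X)$, $-G(\cdot,l(t))h^*(X)$ factor as products of $X$ with $\varpi$ or $\varpi_x$ and are controlled via the smoothness of $\phi$, $G$, $h^*$, $f$, generating $O(|X|^3)$ and $O(|X|\,\|\varpi\|^2)$ residues.

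With $\gamma$ chosen large and the $\beta_i$ as in \eqref{eqn:def-Bi} so that $\gamma\beta_i-\alpha_i>0$, the $X$-, $u(0,t)$-, and $\|u\|^2$-terms in $\dot m$ absorb the remaining quadratic residues produced at the boundary. I therefore expect an inequality of the form $\dot V(t)\le -\lambda V(t)+c\,V(t)^{3/2}$ with $\lambda,c>0$ on the full state space. The main obstacle is precisely this cubic remainder: the neuron growth model combines the geometric nonlinearity of the moving domain $[0,l(t)]$ with the analytic nonlinearities $-\kappa z_1^2$, $\tilde h(z_2)$, $f_1(z_2)$, so a globally negative definite bound is unavailable. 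A standard invariance argument on the sublevel set $\{V\le R\}$ with $R>0$ chosen so that $c\sqrt{R}\le \lambda/2$ then shows that trajectories starting with $V(0)$ sufficiently small remain inside and satisfy $V(t)\le V(0)e^{-\lambda t/2}$.

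The concluding step is norm equivalence. Inverting \eqref{eqn:def-2ndtrans} yields $w=\varpi+h^*(X)$, and the inverse backstepping transformation recovers $u$ from $w$ and $X$. Both maps are locally Lipschitz with $h^*(0)=0$, hence $\|u(\cdot,t)\|^2+|X(t)|^2$ is locally equivalent to $\|\varpi(\cdot,t)\|^2+|X(t)|^2$. Translating back to the original variables delivers local exponential convergence of $\|c(\cdot,t)-c_{\rm eq}\|^2+(c_{\rm c}(t)-c_\infty)^2+(l(t)-l_{\rm s})^2$ in the $L^2$-sense, as claimed. The argument parallels its CETC counterpart in \cite{demir2023event}; the genuinely new ingredient is that the dissipation budget carried by $m(t)$ now stems from the periodic sampling guarantee of Theorem 2 rather than from the continuous-time event-trigger bound, a substitution enabled precisely because the sampling period is capped by the CETC minimum dwell-time $\tau$ in \eqref{eqn:def-samp}.
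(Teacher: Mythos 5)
Your proposal follows essentially the same route as the paper's proof: a Lyapunov functional combining the $L^2$ norm of $\varpi$, a quadratic form in $X$, and the trigger variable $m(t)$ (the paper takes $V=V_1-m$ under its working convention $m<0$, which is the same object up to the sign convention for $m$), leading to a differential inequality with linear decay plus higher-order remainders, a sublevel-set restriction that both dominates the remainders and preserves $0<l(t)\le\bar l$, $|\dot l(t)|\le\bar v$, and finally inversion of the transformations \eqref{eqn:def-2ndtrans} and \eqref{bkst}. The only differences are cosmetic: you use an exponentially weighted spatial norm and record the remainder as a single $V^{3/2}$ term, whereas the paper's estimate \eqref{def-dotVtot3} carries powers up to $V^{3}$ coming from the quadratic and exponential ODE nonlinearities --- the sublevel-set argument absorbs these in exactly the same way via the positive root $p^*$.
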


\vspace{-0.5em}

\begin{proof}
To demonstrate the local convergence of the system, we initially establish the system properties in a non-constant spatial interval as derived in \cite{demir2023event}, outlined as follows:
\begin{align}
    0<l(t)\leq \bar{l}, \quad |\dot{l}(t)|\leq \bar{v}
    \label{eqn:sys-pro}
\end{align}
for some $\bar{l}>l_{\rm s}>0$ and $\bar{v}=\frac{D}{16(D+1)}$.  As demonstrated in Theorem 2 of \cite{demir2023event}, $m(t)<0$ for all $t\in[t_k^p,t_{k+1}^p)$ where $k\in\mathbb{N}$, implying $\Gamma^c(t)\leq 0$ for $t\in[t_k^p,t_{k+1}^p)$. Assuming the well-posedness of the closed-loop system and following the methodology outlined in \cite{demir2023event}, the subsequent Lyapunov functional is considered
\begin{align}
    V(t)=V_1(t)-m(t),\label{w}
\end{align}
where
    \begin{small}
    \begin{align}
        V_1(t)=& d_1\fr{1}{2} \int_0^{l(t)} \varpi(x,t)^2 dx+X(t)^\top \left(d_2P_1+\frac{1}{2}P_2\right) X(t) 
        \label{eqn:def-V}
    \end{align}
    \end{small}

\noindent and $d_1>0$, $d_2>0$, $P_1\succ0$ and $P_2\succeq 0$ are positive definite and positive semidefinite matrices satisfying the Lyapunov equations:
\begin{align}
    &(A + BK^\top )^\top P_1 + P_1 (A + BK^\top ) = - Q_1, \nonumber \\
    &(A + BK^\top )^\top (P_1+P_2) + (P_1+P_2) (A + BK^\top ) = - Q_2 \nonumber
\end{align}
where
\begin{align}
    P_1=\begin{bmatrix}
        p_{1,1} & p_{1,2} \\
        p_{1,2} & p_{2,2} 
    \end{bmatrix}, \quad P_2=\begin{bmatrix} \frac{D\epsilon_1}{\beta}-2p_{1,1} &0 \\ 0 & 0\end{bmatrix}
\end{align}
where we pick $\epsilon\in\mathbb{R}^2$ as 
$\epsilon_1\geq 2l_{\rm c}p_{1,1}$ and  $\epsilon_2 = \frac{p_{1,2}}{l_{\rm c}d_1}$ for some positive definite matrices $Q_1\succ0$ and $Q_2\succ0$. By taking the time derivative of \eqref{eqn:def-V}, applying Poincaré's, Agmon's, and Young's inequalities, we first derive the following expression:
\begin{align}
    \dot{V}\leq &-\alpha^* V +\xi_1 V^{3/2}+\xi_2 V^2+\xi_3V^{5/2}+\xi_4V^{3}
    \label{def-dotVtot3}
\end{align}
where
\begin{align}
   \alpha^*&= \min\left\{\frac{g}{2},\frac{1}{2\lambda_{\min}(P_1+P_2)},\eta\right\},\\
   \xi_1&=\frac{\left(Dd_1|\epsilon \bar{B}|+2d_2\left|P_1^\top B\bar{B}\right|\right)\kappa^2+\frac{d_1r_{\rm g}}{2}(1+L_1)+r_{\rm g}}{d_2^{3/2}\lambda_{\min}(P_1+P_2)^{3/2}}, \label{def:Xi1-new}\\
    \xi_2&=\frac{\Xi_1}{d_2^{2}\lambda_{\min}(P_1+P_2)^2}, \quad 
    \xi_3=\frac{4d_2k_m|P_1|}{d_2^{5/2}\lambda_{\min}(P_1+P_2)^{5/2}},\\
    \xi_4&=\frac{\Xi_2}{d_2^3\lambda_{\min}(P_1+P_2)^3},\label{def:Xi4-new}
\end{align}
 where $d_1$ and $d_2$ are chosen to satisfy
\begin{align}
    d_1&\geq \max\left\{\frac{8\bar{l}\left(D+2\right)+16\bar{l}\beta_4}{D},\frac{4\beta_5+7}{g}\right\}, \\
    d_2&\geq \frac{4}{\lambda_{\min}(Q_2)}\left(Dd_1\left|\epsilon \bar{B}|(A+BK)\right|+\beta_1\right) \nonumber \\
    &+\frac{4}{\lambda_{\min}(Q_2)}\left(\left(D+2+Dd_1+\frac{d_1a}{2}+2\beta_4\right)\frac{2}{\beta^2}\right).
\end{align}
Note that the positive constant parameters are also defined as follows:
\begin{align}
    &F(0,X(t))^2 \leq L_1 X^\top X, \\ &\int_0^{l(t)}\left(\phi(x-l(t))^\top\right)^2dx  \leq  L_{n_2}, 
    \\
    &\int_0^{l(t)}\left(\phi'(x-l(t))^\top B -ak(x,l(t))\right)^2dx  \leq L_{n_3} \label{eqn:Ln3}
    \\
    &\Xi_1=4Dd_1|\epsilon \bar{B}|k_m^2|P_1|^2+8d_2\left|P_1^\top B\bar{B}\right|k_m^2|P_1|^2+\beta_2\nonumber \\
    &\quad +2d_1^2L_{n_3}k_n^2+\frac{d_1^2}{2}L_{n_2}\kappa^2+d_1^2c_{\infty}^2r_{\rm g}^2k_l+8d_2\kappa |P|\beta_5 k_{l}\ \nonumber\\
    &\quad+2d_2\kappa |P|\left(d_1^2\left(\beta^2(1-\epsilon_1)^2\left(1+\bar{G}(l(t))^2\right)+D\right)\right)k_l,
    \\
    &\Xi_2=d_1^2c_{\infty}^2r_{\rm g}^2k_l+\frac{d_1^2}{2}L_{n_2}4k_m^2|P_1|^2+\beta_3 \nonumber \\
    &\quad+\left(d_1^2\left(\beta^2(1-\epsilon_1)^2\left(1+\bar{G}(l(t))^2\right)+D\right)+4\beta_5\right)k_l,
    \\
    &k_l=\max\left\{\left|K_+\lambda_+\right|,\left|K_-\lambda_-\right|\right\}^2, \quad \bar{B}=[-\beta^{-1}~ 0], \\ &k_n=c_{\infty}\max\{K_{+}\lambda_{+}^2,K_{-}\lambda_{-}^2\}, \\  
    &k_m=c_{\infty}\max\{K_{+}\lambda_{+}^3,K_{-}\lambda_{-}^3\}, \\
    &-e^x+x+1\leq x^2 \quad \text{for}  \quad x\leq 1.79.
\end{align}
Given \eqref{def-dotVtot3}, we can demonstrate that within the region $\Omega_1:=\{(\varpi,X) \in L^2 \times \mathbb{R}^2 | V(t) < M_0\}$ where $t\in(t_k^p,t_{k+1}^p)$ for $k\in\mathbb{N}$, there exists a positive constant $M_0>0$ ensuring the satisfaction of the system properties \eqref{eqn:sys-pro}. The existence of such $M_0>0$ is established in Lemma 2 of \cite{demir2021neuroncontrol}. From this result, we have $M_0=\frac{\lambda_{\rm min}(P_1)}{d_2}r^2$ where $r=\min\left\{\frac{\bar{v}}{r_{\rm g}}, l_{\rm s},\bar{l}-l_{\rm s}\right\}$ for $t\in (t_k^p,t_{k+1}^p)$, $k\in\mathbb{N}$. Next, we analyze the convergence within the time interval $t \in (t_k^p, t_{k+1}^p)$ for $k \in \mathbb{N}$, and subsequently for $t\in(0,t)$. Then, a positive constant $M$ exists such that when $V(t_j)<M$, the following norm estimate is valid for $t\in[t_k^p,t_{k+1}^p)$, where $k\in\mathbb{N}$:
\begin{align}
     V(t_{k+1}^p)\leq V(t_k^p)e^{-\frac{\alpha^*}{2}(t_{k+1}^p-t_k^p)}.
     \label{def:eqn:V-norm}
\end{align}
For $M>0$, we define the set $\Omega:=\{(\varpi,X) \in L^2 \times \mathbb{R}^2 | V(t) < M\}$. From Lemma 2 in \cite{demir2021neuroncontrol}, it is clear that if $M\leq M_0$, then $\Omega \subset \Omega_1$ which satisfy the system properties \eqref{eqn:sys-pro} and the norm estimate defined in \eqref{def:eqn:V-norm} for $t\in[t_k^p,t_{k+1}^p)$, where $k\in\mathbb{N}$. Hence, we set $M\leq p^*$, where $p^*$ is a non-zero root of the following polynomial
\begin{align}
    -\alpha^*V+\xi_1 V^{3/2}+\xi_2 V^2+\xi_3V^{5/2}+\xi_4V^{3}=0
\end{align}
for $V>0$. Given that all coefficients of this polynomial are positive, at least one positive root $p^*$ exists. Thus, \eqref{def-dotVtot3} implies
\begin{align}
    \dot{V}\leq -\frac{\alpha^*}{2}V(t)
\end{align}
for $t\in[t_k^p,t_{k+1}^p)$, where $k\in\mathbb{N}$ and $M=\min\{M_0,p^*\}$. The smoothness of $V(t)$ within this interval ensures that $V(t_{k+1}^{p^-})=V(t)$ and $V(t_{k}^{p^+})=V(t_k^p)$, where $t_{k}^{p^+}$ and $t_{k}^{p^-}$ denote the right and left limits of $t=t_k^p$, respectively. Thus, we can have the norm estimate in  \eqref{def:eqn:V-norm}. Then, for any $t\geq 0$ in $t\in [t_k^p,t_{k+1}^p)$ where $k\in\mathbb{N}$, we have
\begin{align}
     V(t)\leq e^{-\alpha^*(t-t_k^p)}V(t_k^p)
    \leq e^{-\alpha^* t}V(0).
\end{align}
Recalling $m(t)<0$ and \eqref{w}, we can write
\begin{align}
    V_1(t)-m(t)\leq e^{-\alpha^* t}V(0)
\end{align}
by applying the comparison principle one can obtain the following norm estimate for the target system $(\varpi,X)$:
    \begin{align}
        &d_1\fr{1}{2} ||\varpi(x)||^2+d_2X(t)^\top \left(P_1+\frac{1}{d_2}P_2\right) X(t) \nonumber \\
    \leq& e^{-\alpha^* t}\left(\fr{d_1}{2} ||\varpi(0)||^2+d_2X(0)^\top \left(P_1+\frac{1}{d_2}P_2\right) X(0)\right) \nonumber \\
    &-e^{-\alpha^* t}m(0)
    \end{align}
Utilizing the invertibility of the transformation \eqref{eqn:def-2ndtrans}, we subsequently prove that the target system $(w,X)$ is also locally exponentially convergent. For the original system $(u,X)$, we leverage the invertibility of the backstepping transformation given in \eqref{bkst}. Consequently, we conclude that the closed-loop system is also locally exponentially convergent. This completes the proof.
\end{proof}

\begin{figure}[t!]
\centering
{       \includegraphics[width=0.9\linewidth]{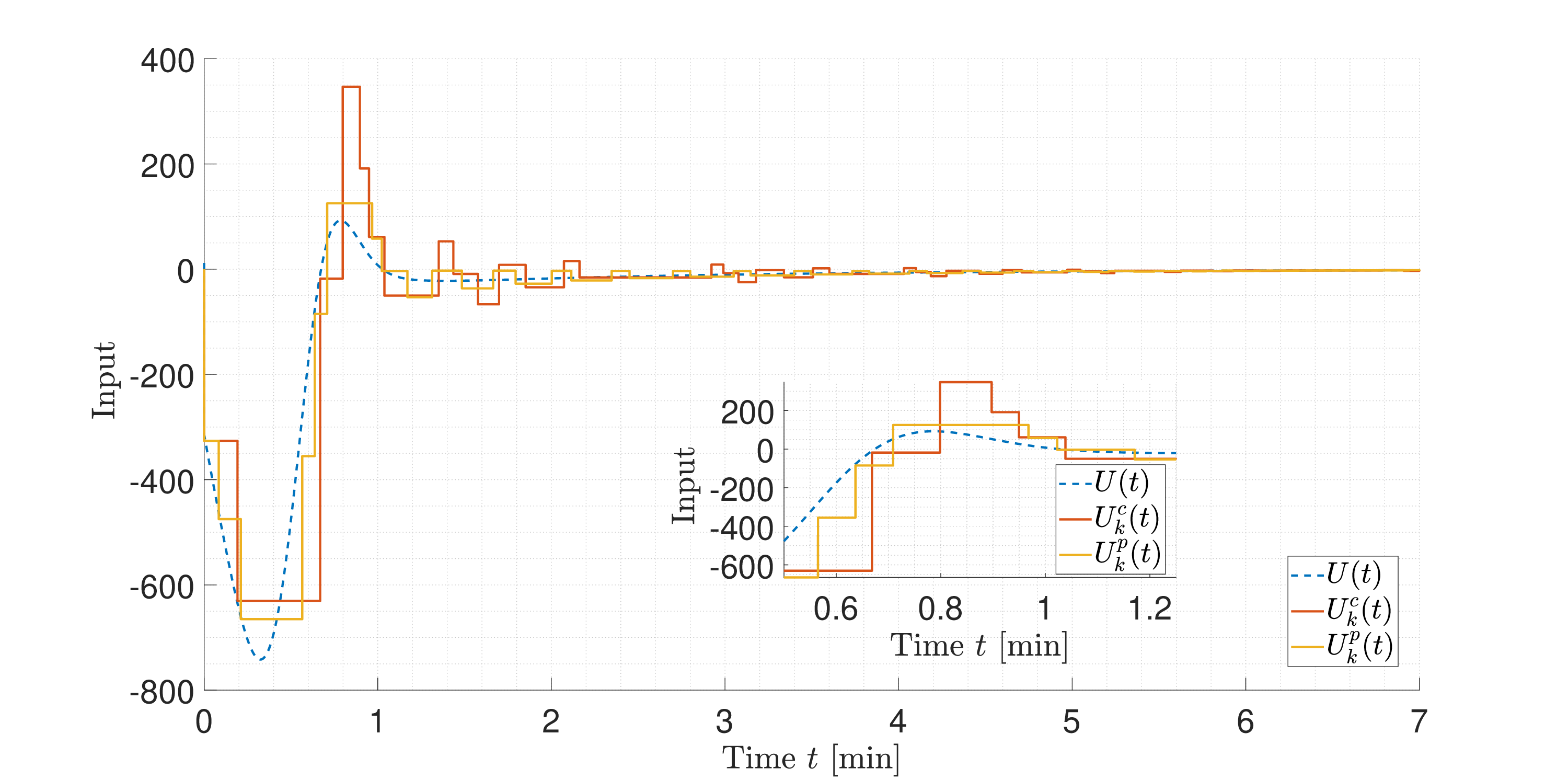}}
  \caption{Comparison between periodic-event triggering control input $U_k^p(t)$, continuous time event triggering control input $U_k^c(t)$ and the continuous control law $U(t)$}
  \label{fig:2} 
\end{figure}

\vspace{-0.5em}

\section{Numerical Simulations}
In this section, we conduct numerical simulations for the system represented by equations \eqref{sys1}-\eqref{sys5}, employing the control law \eqref{real-input} along with the designed periodic event-triggering mechanism \eqref{eqn:PETC-con} utilizing the triggering function \eqref{eqn:def-Tilde-Gammat}. The model parameters are detailed in Table I. Initial conditions are specified as $c_0(x)=1.5c_{\infty}$ for the tubulin concentration along the axon and $l_0=1\mu m$ for the initial axon length. Control gain parameters are set as $k_1=-0.001$ and $k_2=3\times 10^{13}$. The event-triggering parameters are set as follows $m(0)=-0.5$, $\beta_1=2.5\times 10^{8}$, $\beta_2=8\times10^{9}$, $\beta_3=1\times 10^{11}$, $\beta_4=4\times 10^{11}$, $\beta_5=4.5\times 10^{11}$, $\rho=1.5\times 10^{-15}$, $\gamma=1$, $\eta=2$ and $\sigma=0.8$. Moreover, the sampling period for the periodic event-triggering mechanism is selected as $h=0.5~ ms$ which is smaller than the minimal dwell time $\tau\approx0.54~ ms$.

Fig. 2 illustrates the evolution of the continuous-time control input, $U(t)$, the event-triggering control input, $U_k^c(t)$, as defined in \eqref{eqn:def-Utj} with the triggering mechanism given by \eqref{eqn:def-S}-\eqref{eqn:def-m}, and the periodic event-triggering control input, $U_k^p(t)$, as defined in \eqref{eqn:def-U-tildetj} with the triggering condition in \eqref{eqn:PETC-con} and triggering function in \eqref{eqn:def-Tilde-Gammat}. While PETC closely emulates the CETC control input behavior, both PETC and CETC minimized the necessity of control law updates by maintaining comparable performance. In Fig. 3, tubulin concentration, $c(x,t)$, and axon length, $l(t)$, converge to the steady-state solution of the tubulin concentration and the desired axon length. Note that tubulin concentration exhibits smoother changes with the PETC mechanism compared to the CETC mechanism which enhances the practical applicability. 

\begin{table}[!b]
\hfill	\caption{\label{tab:initial}Biological constants and control parameters}
	\centering
	\begin{tabular}{c|c|c|c} 
		\hline
		Parameter  & Value  & Parameter & Value \\
		\hline
		$D$ & $10\times10^{-12}  m^2/s$& $\tilde{r}_{\rm g}$ & $0.053$\\
		$a$& $1\times 10^{-8}  m/s$ & $\gamma$ &  $10^4$\\
		$g$& $5\times 10^{-7} \ s^{-1}$ & $l_{\rm c}$ & $4\mu m$\\
		$r_{\rm g}$& $1.783\times 10^{-5} \ m^4/(mol s)$ & $l_s$ & $12\mu m$ \\
		$c_{\infty}$ &  $0.0119  \ mol/m^3$ & $l_0$& $1\mu m$ \\
 \hline
	\end{tabular}
\end{table}

\begin{figure*}[t]
\begin{center} 
\subfloat[Continous time control input.]
{\includegraphics[width=0.32\linewidth]{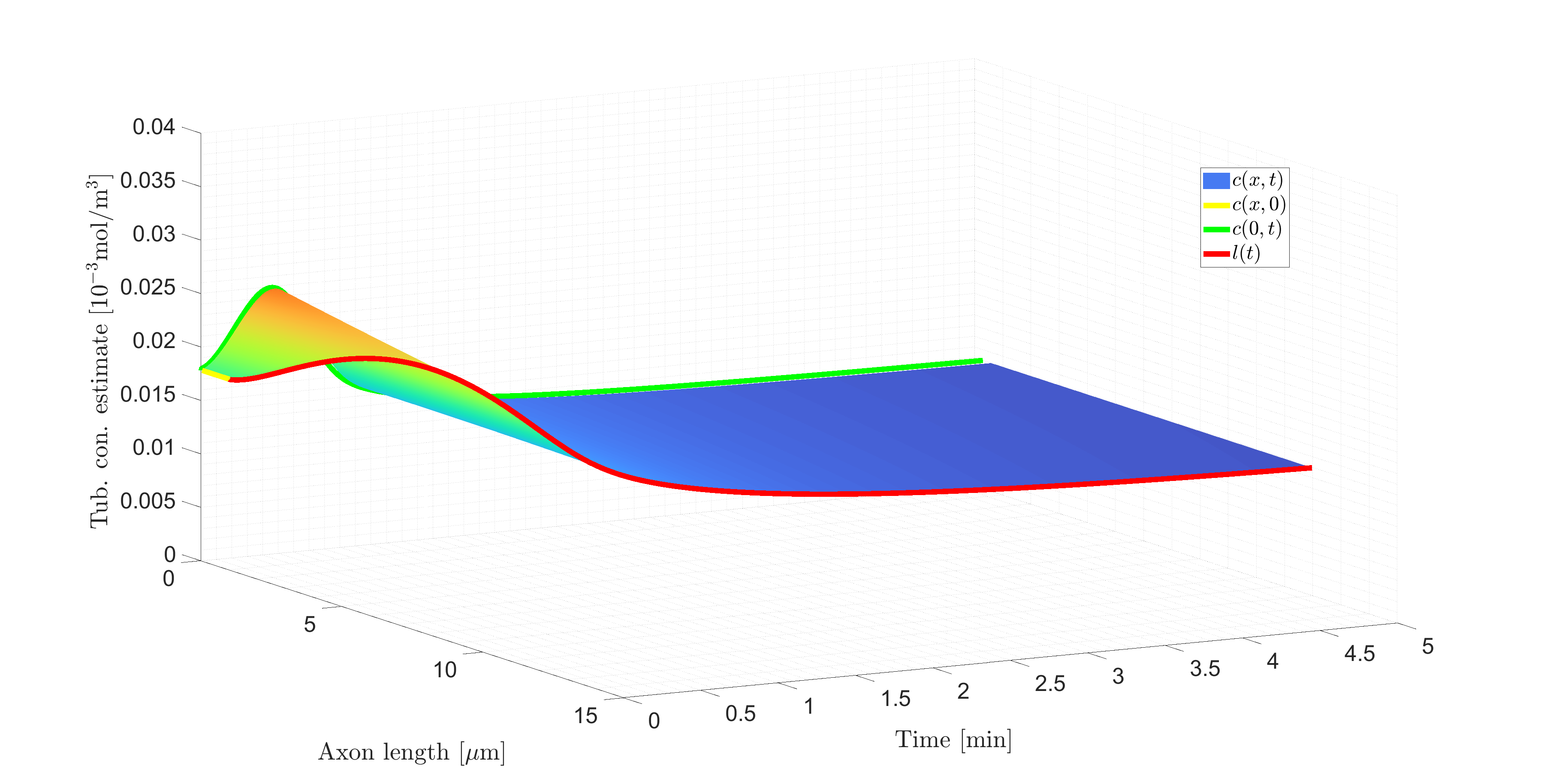}\label{fig:Ut}}\hspace{1mm}
\subfloat[Event-triggered  control input.]
{\includegraphics[width=0.32\linewidth]{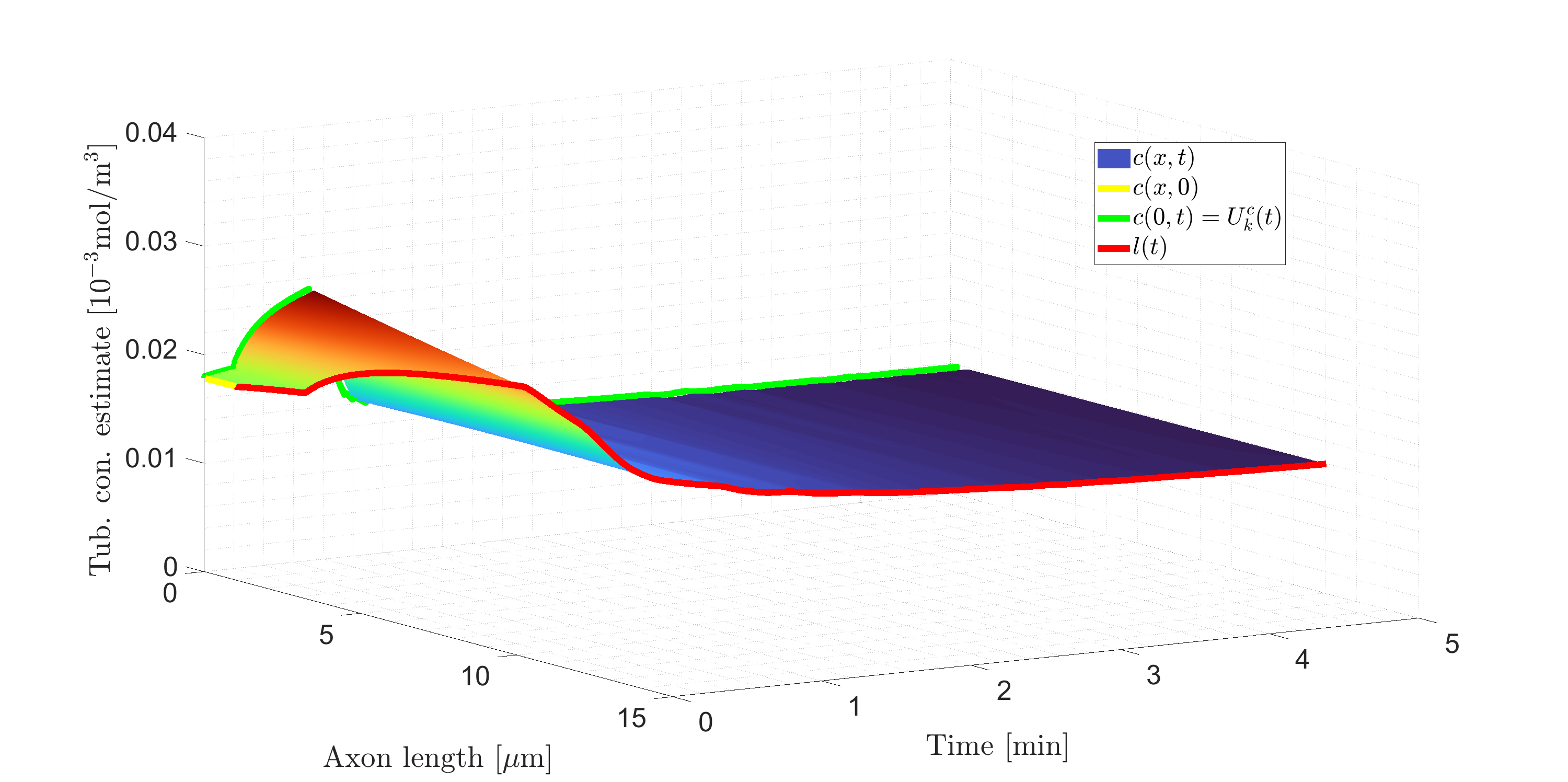}\label{fig:qc}}\hspace{1mm}
\subfloat[Periodic event-triggered control input. ]
{\includegraphics[width=0.32\linewidth]{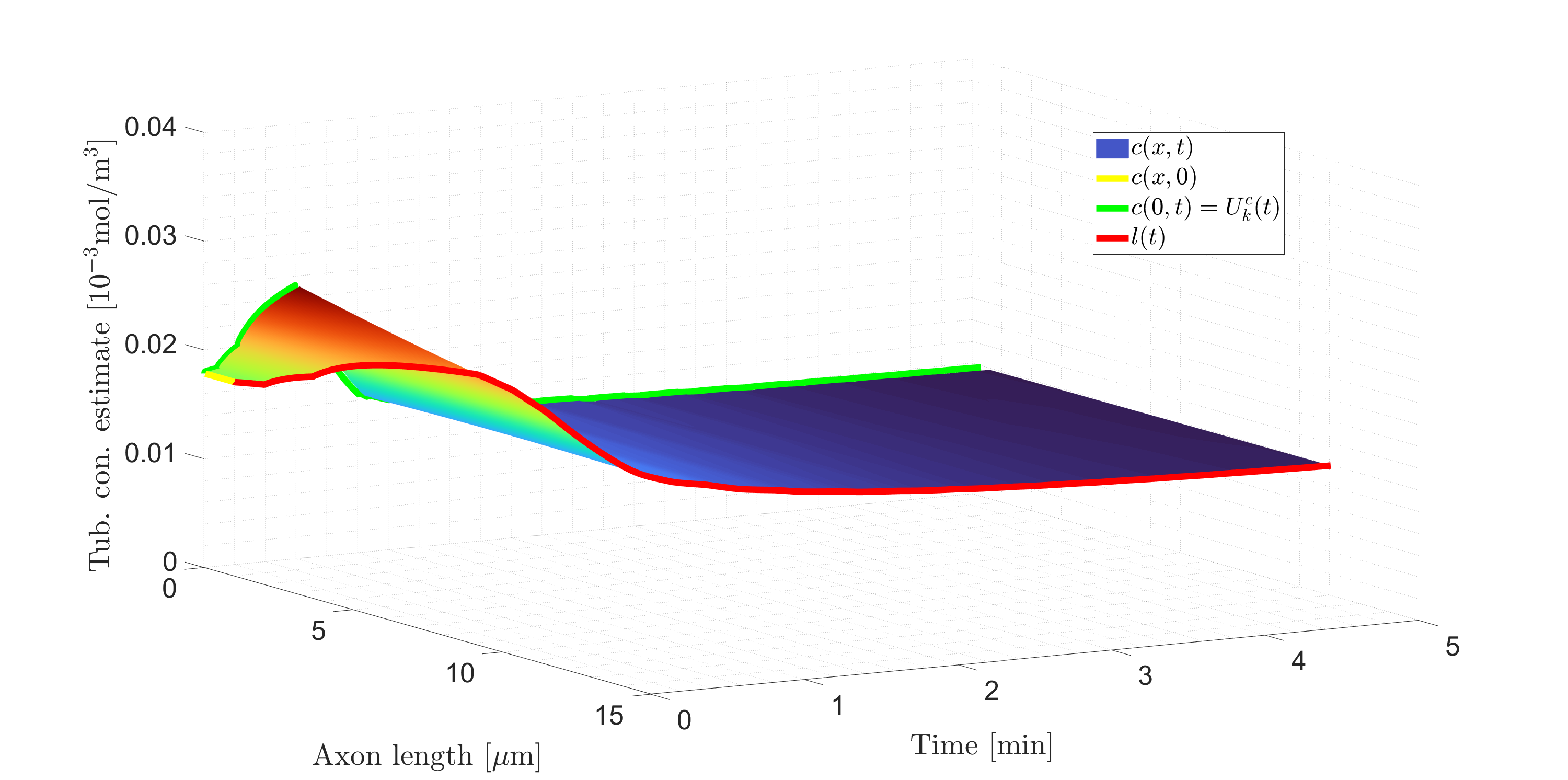}\label{fig:h1}}
\caption{The tubulin concentration governed by \eqref{sys1}-\eqref{sys5}, $c(x,t)$, converges to the steady-state tubulin concentration, $c_{\rm eq}(t)$ by about $t=4.5$min for both continuous control input, CETC and PETC. The axon length, $l(t)$, also converges to the desired axon length, $l_s$, by about $t=4$min for all sampling mechanisms.}
\label{fig:results}
\end{center}
\end{figure*}

\vspace{-0.25em}

\section{Conclusion}

This paper proposes a periodic-event triggering control of the axonal growth problem which is modeled as coupled PDE and nonlinear ODE. The nature of the actuation mechanism acting on the soma motivates the development of proof-based sampled-and-hold control techniques for practicality. By virtue of emulating the continuous-time feedback law and using recent techniques to carefully refine a specific category of continuous-time dynamic event trigger, we conceived a strategy that only requires periodic monitoring of the triggering condition and aperiodic updates of the control action. Given the vast number of neuron cells each with distinct parameters, future research will consider the unknown model parameters by prioritizing the estimation of these parameters and implementing adaptive control for this model. More precisely, exploiting Batch Least Squares Identifiers (BaLSI) techniques finite-time identification of the unknown parameters, and local exponential convergence are challenging but promising.

\vspace{-0.25em}

\section*{Acknowledgemts}

The authors express appreciation to Bhathiya Rathnayake for engaging in generous and selfless discussions on event-triggered control.

\bibliographystyle{IEEEtranS}
\bibliography{main.bib}

\begin{thebibliography}{10}
\providecommand{\url}[1]{#1}
\csname url@samestyle\endcsname
\providecommand{\newblock}{\relax}
\providecommand{\bibinfo}[2]{#2}
\providecommand{\BIBentrySTDinterwordspacing}{\spaceskip=0pt\relax}
\providecommand{\BIBentryALTinterwordstretchfactor}{4}
\providecommand{\BIBentryALTinterwordspacing}{\spaceskip=\fontdimen2\font plus
\BIBentryALTinterwordstretchfactor\fontdimen3\font minus \fontdimen4\font\relax}
\providecommand{\BIBforeignlanguage}[2]{{%
\expandafter\ifx\csname l@#1\endcsname\relax
\typeout{** WARNING: IEEEtranS.bst: No hyphenation pattern has been}%
\typeout{** loaded for the language `#1'. Using the pattern for}%
\typeout{** the default language instead.}%
\else
\language=\csname l@#1\endcsname
\fi
#2}}
\providecommand{\BIBdecl}{\relax}
\BIBdecl

\bibitem{aaarzen1999simple}
K.-E. {\AA}arz{\'e}n, ``A simple event-based pid controller,'' \emph{IFAC Proceedings Volumes}, vol.~32, no.~2, pp. 8687--8692, 1999.

\bibitem{bastin2019boundary}
G.~Bastin, J.-M. Coron, A.~Hayat, and P.~Shang, ``Boundary feedback stabilization of hydraulic jumps,'' \emph{IFAC Journal of Systems and Control}, vol.~7, p. 100026, 2019.

\bibitem{bradbury2011manipulating}
E.~J. Bradbury and L.~M. Carter, ``Manipulating the glial scar: chondroitinase {ABC} as a therapy for spinal cord injury,'' \emph{Brain research bulletin}, vol.~84, no. 4-5, pp. 306--316, 2011.

\bibitem{buisson2018control}
M.~Buisson-Fenet, S.~Koga, and M.~Krstic, ``Control of piston position in inviscid gas by bilateral boundary actuation,'' in \emph{2018 IEEE Conference on Decision and Control (CDC)}.\hskip 1em plus 0.5em minus 0.4em\relax IEEE, 2018, pp. 5622--5627.

\bibitem{Zhelin20}
Z.~Chen, J.~Bentsman, and B.~G. Thomas, ``Enthalpy-based output feedback control of the {S}tefan problem with hysteresis,'' in \emph{2020 American Control Conference (ACC)}, 2020, pp. 2661--2666.

\bibitem{demir2021neuroncontrol}
C.~Demir, S.~Koga, and M.~Krstic, ``Neuron growth control by pde backstepping: Axon length regulation by tubulin flux actuation in soma,'' in \emph{2021 60th IEEE Conference on Decision and Control (CDC)}, 2021, pp. 649--654.

\bibitem{demir2022neuron}
------, ``Neuron growth output-feedback control by pde backstepping,'' in \emph{2022 American Control Conference (ACC)}.\hskip 1em plus 0.5em minus 0.4em\relax IEEE, 2022, pp. 4159--4164.

\bibitem{demir2023event}
------, ``Event-triggered control of neuron growth with dirichlet actuation at soma,'' \emph{arXiv preprint arXiv:2310.00131}, 2023.

\bibitem{diagne2015feedback}
M.~Diagne, P.~Shang, and Z.~Wang, ``Feedback stabilization for the mass balance equations of an extrusion process,'' \emph{IEEE Transactions on Automatic Control}, vol.~61, no.~3, pp. 760--765, 2015.

\bibitem{diehl2016efficient}
S.~Diehl, E.~Henningsson, and A.~Heyden, ``Efficient simulations of tubulin-driven axonal growth,'' \emph{Journal of computational neuroscience}, vol.~41, no.~1, pp. 45--63, 2016.

\bibitem{diehl2014one}
S.~Diehl, E.~Henningsson, A.~Heyden, and S.~Perna, ``A one-dimensional moving-boundary model for tubulin-driven axonal growth,'' \emph{Journal of theoretical biology}, vol. 358, pp. 194--207, 2014.

\bibitem{COCV_2003__9__275_0}
W.~B. Dunbar, N.~Petit, P.~Rouchon, and P.~A. Martin, ``\BIBforeignlanguage{en}{Motion planning for a nonlinear {Stefan} problem},'' \emph{\BIBforeignlanguage{en}{ESAIM: Control, Optimisation and Calculus of Variations}}, vol.~9, pp. 275--296, 2003.

\bibitem{ecklebe2021toward}
S.~Ecklebe, F.~Woittennek, C.~Frank-Rotsch, N.~Dropka, and J.~Winkler, ``Toward model-based control of the vertical gradient freeze crystal growth process,'' \emph{IEEE Transactions on Control Systems Technology}, 2021.

\bibitem{espitia2016event}
N.~Espitia, A.~Girard, N.~Marchand, and C.~Prieur, ``Event-based control of linear hyperbolic systems of conservation laws,'' \emph{Automatica}, vol.~70, pp. 275--287, 2016.

\bibitem{garcia2017continuum}
J.~A. Garc{\'\i}a-Grajales, A.~J{\'e}rusalem, and A.~Goriely, ``Continuum mechanical modeling of axonal growth,'' \emph{Computer Methods in Applied Mechanics and Engineering}, vol. 314, pp. 147--163, 2017.

\bibitem{gupta2017classical}
S.~C. Gupta, \emph{The classical Stefan problem: basic concepts, modelling and analysis with quasi-analytical solutions and methods}.\hskip 1em plus 0.5em minus 0.4em\relax Elsevier, 2017, vol.~45.

\bibitem{heemels2012introduction}
W.~P. Heemels, K.~H. Johansson, and P.~Tabuada, ``An introduction to event-triggered and self-triggered control,'' in \emph{2012 ieee 51st ieee conference on decision and control (cdc)}.\hskip 1em plus 0.5em minus 0.4em\relax IEEE, 2012, pp. 3270--3285.

\bibitem{heemels2008analysis}
W.~Heemels, J.~Sandee, and P.~Van Den~Bosch, ``Analysis of event-driven controllers for linear systems,'' \emph{International journal of control}, vol.~81, no.~4, pp. 571--590, 2008.

\bibitem{huebner2009axon}
E.~A. Huebner and S.~M. Strittmatter, ``Axon regeneration in the peripheral and central nervous systems,'' \emph{Cell biology of the axon}, pp. 305--360, 2009.

\bibitem{izadi2015pde}
M.~Izadi, J.~Abdollahi, and S.~S. Dubljevic, ``{PDE} backstepping control of one-dimensional heat equation with time-varying domain,'' \emph{Automatica}, vol.~54, pp. 41--48, 2015.

\bibitem{karimi2010synergistic}
S.~Karimi-Abdolrezaee, E.~Eftekharpour, J.~Wang, D.~Schut, and M.~G. Fehlings, ``Synergistic effects of transplanted adult neural stem/progenitor cells, chondroitinase, and growth factors promote functional repair and plasticity of the chronically injured spinal cord,'' \emph{Journal of Neuroscience}, vol.~30, no.~5, pp. 1657--1676, 2010.

\bibitem{kofman2006level}
E.~Kofman and J.~H. Braslavsky, ``Level crossing sampling in feedback stabilization under data-rate constraints,'' in \emph{Proceedings of the 45th IEEE Conference on Decision and Control}.\hskip 1em plus 0.5em minus 0.4em\relax IEEE, 2006, pp. 4423--4428.

\bibitem{koga2023event}
S.~Koga, C.~Demir, and M.~Krstic, ``Event-triggered safe stabilizing boundary control for the stefan pde system with actuator dynamics,'' in \emph{2023 American Control Conference (ACC)}.\hskip 1em plus 0.5em minus 0.4em\relax IEEE, 2023, pp. 1794--1799.

\bibitem{krstic09}
M.~Krstic, ``Compensating actuator and sensor dynamics governed by diffusion {PDE}s,'' \emph{Systems \& Control Letters}, vol.~58, no.~5, pp. 372--377, 2009.

\bibitem{krstic2008boundary}
M.~Krstic and A.~Smyshlyaev, \emph{Boundary control of PDEs: A course on backstepping designs}.\hskip 1em plus 0.5em minus 0.4em\relax SIAM, 2008.

\bibitem{liu1997neuronal}
X.~Z. Liu, X.~M. Xu, R.~Hu, C.~Du, S.~X. Zhang, J.~W. McDonald, H.~X. Dong, Y.~J. Wu, G.~S. Fan, M.~F. Jacquin \emph{et~al.}, ``Neuronal and glial apoptosis after traumatic spinal cord injury,'' \emph{Journal of Neuroscience}, vol.~17, no.~14, pp. 5395--5406, 1997.

\bibitem{maccioni2001molecular}
R.~B. Maccioni, J.~P. Mu{\~n}oz, and L.~Barbeito, ``The molecular bases of {Alzheimer's} disease and other neurodegenerative disorders,'' \emph{Archives of medical research}, vol.~32, no.~5, pp. 367--381, 2001.

\bibitem{maidi2014}
A.~Maidi and J.-P. Corriou, ``Boundary geometric control of a linear {Stefan} problem,'' \emph{Journal of Process Control}, vol.~24, no.~6, pp. 939--946, 2014.

\bibitem{mclean2006stability}
D.~R. McLean and B.~P. Graham, ``Stability in a mathematical model of neurite elongation,'' \emph{Mathematical medicine and biology: a journal of the IMA}, vol.~23, no.~2, pp. 101--117, 2006.

\bibitem{mclean2004continuum}
D.~R. McLean, A.~van Ooyen, and B.~P. Graham, ``Continuum model for tubulin-driven neurite elongation,'' \emph{Neurocomputing}, vol.~58, pp. 511--516, 2004.

\bibitem{Petit10}
N.~Petit, ``Control problems for one-dimensional fluids and reactive fluids with moving interfaces,'' in \emph{Advances in the theory of control, signals and systems with physical modeling}.\hskip 1em plus 0.5em minus 0.4em\relax Springer, 2010.

\bibitem{rathnayake2022event2}
B.~Rathnayake and M.~Diagne, ``Event-based boundary control of one-phase stefan problem: A static triggering approach,'' in \emph{2022 American Control Conference (ACC)}.\hskip 1em plus 0.5em minus 0.4em\relax IEEE, 2022, pp. 2403--2408.

\bibitem{rathnayake2022event}
------, ``Event-based boundary control of the stefan problem: A dynamic triggering approach,'' in \emph{2022 IEEE 61st Conference on Decision and Control (CDC)}.\hskip 1em plus 0.5em minus 0.4em\relax IEEE, 2022, pp. 415--420.

\bibitem{rathnayake2023observer}
------, ``Observer-based periodic event-triggered boundary control of the one-phase stefan problem,'' \emph{IFAC-PapersOnLine}, vol.~56, no.~2, pp. 11\,415--11\,422, 2023.

\bibitem{rathnayake2023periodic}
------, ``Periodic event-triggered boundary control of a class of reaction-diffusion pdes,'' in \emph{2023 American Control Conference (ACC)}.\hskip 1em plus 0.5em minus 0.4em\relax IEEE, 2023, pp. 1800--1806.

\bibitem{rathnayake2023self}
------, ``Self-triggered boundary control of a class of reaction-diffusion pdes,'' in \emph{2023 62nd IEEE Conference on Decision and Control (CDC)}.\hskip 1em plus 0.5em minus 0.4em\relax IEEE, 2023, pp. 6887--6892.

\bibitem{squire2012fundamental}
L.~Squire, D.~Berg, F.~E. Bloom, S.~Du~Lac, A.~Ghosh, and N.~C. Spitzer, \emph{Fundamental neuroscience}.\hskip 1em plus 0.5em minus 0.4em\relax Academic press, 2012.

\bibitem{susto2010control}
G.~A. Susto and M.~Krstic, ``Control of pde--ode cascades with neumann interconnections,'' \emph{Journal of the Franklin Institute}, vol. 347, no.~1, pp. 284--314, 2010.

\bibitem{tang2011state}
S.~Tang and C.~Xie, ``State and output feedback boundary control for a coupled pde--ode system,'' \emph{Systems \& Control Letters}, vol.~60, no.~8, pp. 540--545, 2011.

\bibitem{van1994neuritic}
M.~P. Van~Veen and J.~Van~Pelt, ``Neuritic growth rate described by modeling microtubule dynamics,'' \emph{Bulletin of mathematical biology}, vol.~56, no.~2, pp. 249--273, 1994.

\bibitem{yu2020bilateral}
H.~Yu, M.~Diagne, L.~Zhang, and M.~Krstic, ``Bilateral boundary control of moving shockwave in lwr model of congested traffic,'' \emph{IEEE Transactions on Automatic Control}, 2020.

\end{thebibliography}
\end{document}